\newtheorem{definition}{Definition}
\newtheorem{lemma}[definition]{Lemma} 
\newtheorem{theorem}[definition]{Theorem}
\newtheorem{observation}[definition]{Observation}
\newcommand{\R}{\mathds{R}}
\newcommand{\nula}{\mathbf{0}}
\newcommand{\LCP}{\mathrm{LCP}}
\newcommand{\qq}{\mathbf{q}}
\newcommand{\xx}{\mathbf{x}}
\newcommand{\yy}{\mathbf{y}}
\newcommand{\zz}{\mathbf{z}}
\newcommand{\bb}{\mathbf{b}}
\newcommand{\ww}{\mathbf{w}}
\newcommand{\F}{\mathcal{F}}
\renewcommand{\O}{\mathcal{O}}
\renewcommand{\L}{\mathcal{L}}
\newcommand{\I}{\mathcal{I}}
\renewcommand{\vert}{\mathop{\rm vert}}
\newcommand{\carr}{\mathop{\rm carr}}
\title{A New Combinatorial Property of Geometric Unique Sink Orientations}
\author{Yuan Gao\thanks{Institute of Theoretical Computer Science, Department of Computer Science, ETH Zurich, Zurich, Switzerland} \and Bernd G\"artner\thanks{Institute of Theoretical Computer Science, Department of Computer Science, ETH Zurich, Zurich, Switzerland} \and Jourdain Lamperski\thanks{Operations Research Center, Massachusetts Institute of Technology, Cambridge MA, USA} }
\date{\today}
\begin{document}
\maketitle

\begin{abstract}
  A unique sink orientation (USO) is an orientation of the hypercube graph with the property that every face has a unique sink. A number of well-studied problems reduce in strongly polynomial time to finding the global sink of a USO; most notably, linear programming (LP) and the P-matrix linear complementarity problem (P-LCP). The former is not known to have a strongly polynomial-time algorithm, while the latter is not known to even have a polynomial-time algorithm, motivating the problem to find the global sink of a USO. Although, every known class of \emph{geometric} USOs, arising from a concrete problem such as LP, is exponentially small, relative to the class of all USOs. Accordingly, geometric USOs exhibit additional properties that set them apart from general USOs, and it may be advantageous, if not necessary, to leverage these properties to find the global sink of a USO faster. Only a few such properties are known. In this paper, we establish a new combinatorial property of the USOs that arise from \emph{symmetric} P-LCP, which includes the USOs that arise from linear and simple convex quadratic programming.
\end{abstract}

\section{Introduction}
A \emph{unique sink orientation} (USO) is an orientation of the $n$-dimensional
hypercube graph (the $n$-cube) with the property that every face (subcube) has a
unique sink. Figure~\ref{fig:uso} depicts a 3-dimensional USO 
called the \emph{spinner} and highlights
the unique sink of the whole cube (which is itself a face) and the
unique sink of each of the six 2-faces. 

\begin{figure}[htb]
\begin{center}  
  \includegraphics[width=0.2\textwidth]{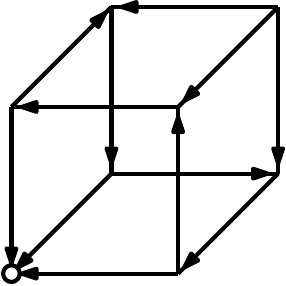}\hspace{1cm}
  \includegraphics[width=0.3\textwidth]{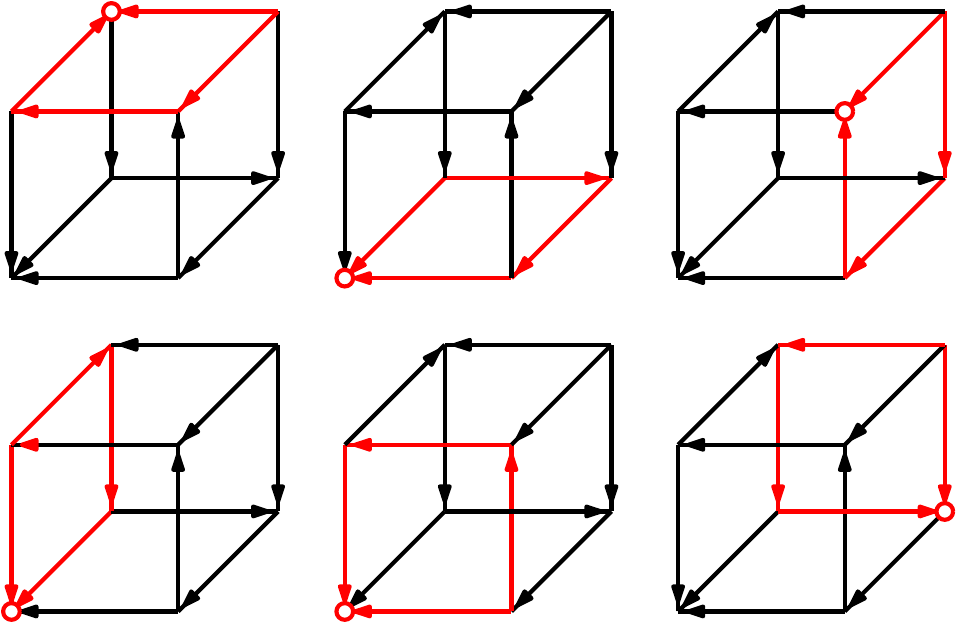}\hspace{1cm}
  \includegraphics[width=0.2\textwidth]{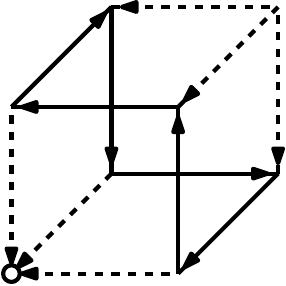}
\end{center}
\caption{The spinner, a 3-dimensional USO; the black circle is the unique global sink, and the red circles are the unique sinks in the six 2-faces. 
The USO is called the spinner because it contains a directed cycle of length~6 and is rotationally symmetric under any cyclic permutation of the dimensions.\label{fig:uso}} 
\end{figure}

USOs were introduced by Stickney and Watson as a combinatorial
model to study principal pivoting algorithms for P-matrix linear
complementarity problems (P-LCPs)~\cite{stickney}. The USOs resulting
from P-LCPs are called \emph{P-cubes}. The spinner in Figure~\ref{fig:uso} is actually 
Stickney and Watson's \emph{digraph 19}.

After having been forgotten
for more than twenty years, USOs were rediscovered by Sz\'abo and
Welzl~\cite{szabo2001unique}, motivated by applications in
optimization theory, but mostly by their simple and clean combinatorial
structure.

\paragraph{The algorithmic problem.}
The connection to optimization is as follows. The unique
global sink $S$ corresponds to an optimal solution, among a set of candidate
solutions (the $2^n$ cube vertices), to some optimization problem. The
USO defines a structure on the search space that is not explicitly
given (because of its exponential size) but can be queried
locally. That is, we have a \emph{vertex evaluation} oracle that takes as input any vertex
$V$ and returns the orientations of the $n$ edges incident to $V$. The
algorithmic question is: how many vertex evaluations do we need to find
$S$? In applications, vertex evaluation can typically be performed in polynomial time.

Clearly we can find the global sink with $2^n$ vertex evaluations; however, we can do better. Prior to Sz\'abo and Welzl, a number of acyclic combinatorial models had been studied. These models arise when candidate solutions are ordered by an objective function. In such models, \emph{subexponential} randomized algorithms exist~\cite{MatShaWel:A-subexponential,g-saaop-95}; the main ideas involved can essentially be distilled into the \emph{Random Facet} algorithm, which requires an expected number of at most $e^{2\sqrt{n}}$ vertex evaluations to find the global sink of an acyclic USO~\cite{Gar:The-random-facet}. However, USOs may in general contain directed cycles (see Figure~\ref{fig:uso} for the smallest example); in particular, P-cubes may contain directed cycles. Sz\'abo and Welzl were the first to come up with nontrivial algorithms for this more general context. They developed a deterministic algorithm that uses at most $1.606^n$ vertex evaluations~\cite[Theorem 4.1]{szabo2001unique} and a randomized algorithm~\cite[Lemma 3.2]{szabo2001unique} that uses at most $1.438^n$ vertex evaluations in expectation when combined with the optimal randomized algorithm for the $3$-cube~\cite{Tessaro}.

The aforementioned algorithmic results have not be improved during the last twenty years, although there seems to be a lot of room for improvement. Indeed, the best known lower bound for the number of vertex evaluations is a small polynomial $\Omega(n^2/\log n)$~\cite{SchSza:Finding}.

\paragraph{Geometric USOs.} 
It might be that USOs are algorithmically difficult because they form an extremely rich class of orientations that contains many complicated orientations that we never encounter ``in practice''. Let us elaborate on this. As the $n$-cube has $n2^{n-1}$ edges, the number of $n$-cube orientations is $2^{n2^{n-1}}$. The number of USOs is somewhat smaller but still doubly exponential, namely $2^{\Theta(2^n\log n)}$~\cite{Mat:The-Number}. In contrast, the number of USOs that we encounter in applications is tiny, as we explain next. Application areas include linear complementarity (the original source of USOs~\cite{stickney}), linear programming~\cite{gs-lpuso-06,Gar:The-random-facet}, and computational geometry~\cite{fg-sebbcsa-04}. In all of these areas, a USO is typically defined by polynomially (in $n$) many input numbers; for example, a matrix $M\in\R^{n\times n}$ and a vector $\qq\in\R^n$, in the case of P-cubes. More precisely, the edge orientations are determined by signs of algebraic expressions (all of polynomial degree) over the input numbers. In this situation, the number of USOs that can be obtained from all possible inputs is singly exponential. For example, there are only $2^{\Theta(n^3)}$ P-cubes~\cite{FONIOK2014155}. Let us say that a \emph{geometric} USO is one that is generated from an input of polynomial size, as previously described.

Hence, the number of geometric USOs (the ones that we actually want to work with) is exponentially smaller than the number of all USOs. It is therefore natural to ask whether we can exploit this fact algorithmically. That is, why bother with all USOs if we are only interested in a tiny fraction of them? It is also natural to ask if the USO abstraction is even worth considering given that it significantly overestimates the number of problem instances. There is an important reason for the abstraction: a simple combinatorial model might allow us to see and exploit relevant structure of a concrete problem that otherwise would remain hidden behind the input numbers. A striking example of this is the  subexponential algorithm (mentioned before) in the combinatorial model of \emph{LP-type problems}~\cite{ShaWel:A-combinatorial,MatShaWel:A-subexponential}. Although developed in a completely abstract setting, it was at the same time a breakthrough result in the theory of linear programming. 

On the other hand, a combinatorial model typically fails to capture some properties of the underlying geometric situation and therefore loses information. A well-known example is Pappus's theorem that no longer holds when we move from a geometric setting (vector configurations) to a combinatorial setting (matroids)~\cite[Proposition 6.1.10]{OxleyJ.G2011MT}. Still, we might be able to regain some information by extracting additional \emph{combinatorial} structure from the geometric situation. In the remainder of this introduction, we review the known results in this direction for geometric USOs and introduce the new combinatorial property that we establish in this paper.

\paragraph{P-cubes satisfy the Holt-Klee property.}
A classical example of a combinatorial model is a \emph{polytope graph}, a graph formed by the vertices and edges of a polytope. It turns out that many questions about polytopes can be answered through a sufficient understanding of polytope graphs and their properties. An early related result is Balinski's theorem: the graph of an $n$-dimensional polytope is $n$-connected~\cite{balinski1961}. Holt and Klee later established a directed version of Balinski's theorem: in the graph of an $n$-dimensional polytope, with edges oriented according to a generic linear function, there are $n$ directed paths from the highest to the lowest vertex that are internally disjoint~\cite{HolKle:A-proof}. The \emph{Holt-Klee property} generalizes from geometric cubes to P-cubes. For P-cubes, it states that there are $n$ directed paths from the (unique) source to the unique sink that are internally disjoint~\cite[Corollary 4.4]{GarMorRus:Unique}. Figure~\ref{fig:HK} shows such paths for the spinner (which is actually a P-cube; see Section~\ref{sec:P}), along with a USO that fails to satisfy the Holt-Klee property.

\begin{figure}[htb]
\begin{center}  
  \includegraphics[width=0.5\textwidth]{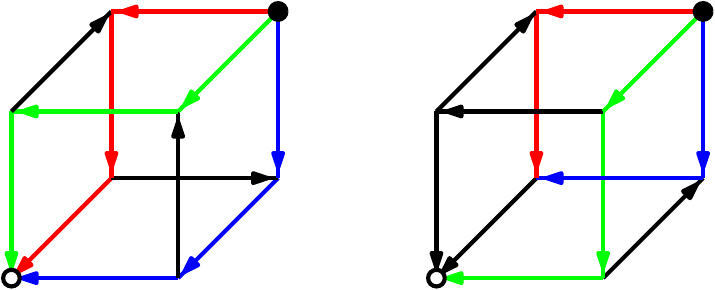}
\end{center}
\caption{The spinner on the left satisfies the Holt-Klee property because there are $3$ internally disjoint directed paths from the global source to the global sink. The USO on the right does not satisfy the Holt-Klee property because the red and the blue path necessarily interfere.\label{fig:HK}} 
\end{figure}

A USO that satisfies the Holt-Klee property is not necessarily a P-cube; the Holt-Klee property is necessary but not sufficient. In fact, it dramatically fails to be sufficient because the number of USOs that satisfy the Holty-Klee property is again doubly exponential~\cite{FONIOK2014155}. Although, it might be the case that such USOs, despite the large number of them, are algorithmically easier than general USOs. But so far, there is no real indication of this. The only known result is as follows. Matou\v{s}ek showed that the Random Facet algorithm may actually need an expected number of $e^{\Theta(\sqrt{n})}$ vertex evaluations for a class of (acyclic) USOs~\cite{Mat:Lower, Gar:The-random-facet}. The lower bound is attained for a random member of the class. In contrast, all members of the class with the Holt-Klee property are solved by Random Facet in expected $O(n^2)$ vertex evaluations~\cite{Gar:The-random-facet}.

\paragraph{K-cubes are locally uniform.}
\emph{K-cubes} are the USOs that arise from K-matrix linear complementarity problems (K-LCPs). K-LCPs form an ``easy'' subclass of P-LCPs that have long known to be solvable in polynomial time (for P-LCPs, this is open)~\cite{Cha:A-special,Sai:A-note}. K-cubes were shown to be \emph{locally uniform}~\cite{FonFukGar:Pivoting}. Identifying the cube vertices with the subsets of $[n]$, this means the following: if some vertex $V$ has only incoming 
edges from (or only outgoing edges to) $k$ ``higher'' neighbors $V\cup\{i_1\},\ldots, V\cup\{i_k\}$, then all edges in the $k$-face spanned by these vertices have the same direction (all down, or all up); see Figure~\ref{fig:locally_uniform}.

\begin{figure}[htb]
\begin{center}  
  \includegraphics[width=0.6\textwidth]{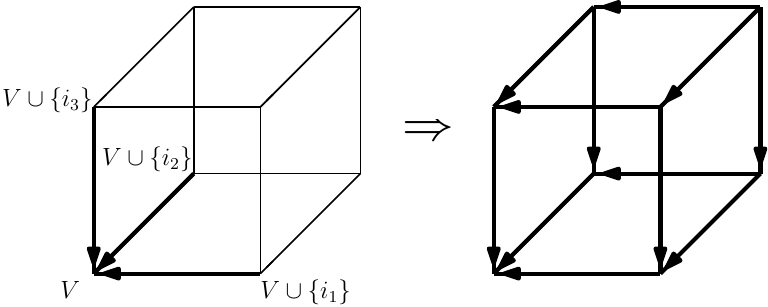}
\end{center}
\caption{Locally uniform USOs: whenever the lowest vertex $V$ in a face has only incoming (or only outgoing) edges to its neighbors in the face, then all edges in the face point downwards (or upwards). \label{fig:locally_uniform}} 
\end{figure}

This property can in fact be exploited algorithmically: in a locally uniform USO, every directed path has length at most $2n$~\cite[Theorem 5.9]{FonFukGar:Pivoting}. Hence, we can find the sink with a linear number of vertex evaluations, starting at any vertex, and simply following a directed path. When applied to K-cubes, this simply enhances the arsenal of available polynomial-time methods for K-LCP, but the number of locally uniform USOs is doubly exponential and therefore much larger than the number of K-cubes~\cite{FONIOK2014155}.

\paragraph{Tridiagonal and Hessenberg P-LCPs.}
A matrix $M=(m_{ij})\in\R^{n\times n}$ is \emph{tridiagonal} if $m_{ij}=0$ whenever $|i-j|>1$, and more generally (lower) \emph{Hessenberg} if $m_{ij}=0$ whenever $j-i>1$. P-LCPs with Hessenberg matrices can be solved in polynomial time~\cite{GARTNER2012484}. Although this can be shown without considering USOs; the main idea involved is a combinatorial property of the corresponding USOs~\cite{Sprecher}.

\paragraph{Our contribution: D-cubes satisfy property~L.}
There is another and actually quite interesting subclass of P-LCPs, namely the ones with a symmetric P-matrix. As the symmetric P-matrices are exactly the positive \textbf{d}efinite matrices, we refer to the USOs generated by symmetric P-LCPs as \emph{D-cubes}. Unlike general P-LCPs, symmetric P-LCPs are equivalent to a class of (strongly) convex quadratic programs~\cite[Section 1.2]{CotPanSto:LCP} and as such can be solved in polynomial time using the ellipsoid method~\cite{KOZLOV1980223}. Still, unlike K-LCPs, the symmtric P-LCPs cannot be called an ``easy'' subclass of all P-LCPs. In particular, no \emph{strongly} polynomial-time algorithms are known; these are algorithms with a number of arithmetic operations that can be polynomially bounded in $n$. This means, the situation is the same as for linear programming, where the problem of finding a strongly polynomial-time algorithm is on Steve Smale's 1998 list of \emph{mathematical problems for the next century}~\cite{Smale}.

Promising candidates for strongly polynomial-time algorithms are \emph{combinatorial methods} that---unlike the ellipsoid method---explore a discrete search space, where every search step can be implemented in strongly polynomial time. The most prominent combinatorial method for linear programming is the simplex method. But as there is a (strongly polynomial-time) reduction from linear programming to the problem of finding the sink in a D-cube~\cite{gs-lpuso-06}, methods for the latter problem are also relevant for the former. In fact, the fastest deterministic combinatorial algorithm (currently known) for solving linear programs with $2n$ constraints and $n$ variables is based on this reduction~\cite{gs-lpuso-06}. 

The new combinatorial property of D-cubes that we establish is as follows. For every vertex $V$ in a USO, and for all $i,j\notin V$, consider the $2$-face spanned by $V,V\cup\{i\},V\cup\{j\},V\cup\{i,j\}$; see Figure~\ref{fig:propertyL}. We write $i\rightarrow j$ if in this face, the edges along dimension $j$ have opposite directions. As the face has a unique sink, it is easy to see that we cannot simultaneously have $j\rightarrow i$. But we may have neither of the two directions.

\begin{figure}[htb]
\begin{center}  
  \includegraphics[width=0.6\textwidth]{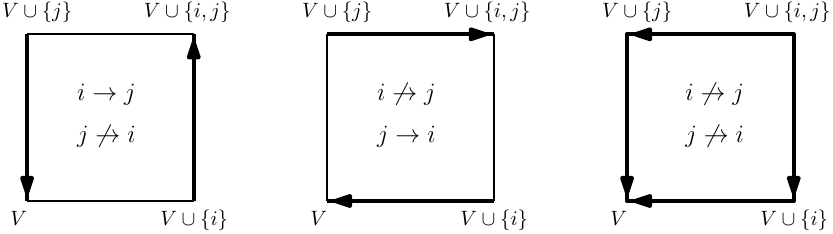}
\end{center}
\caption{Property L at vertex $V$ of a USO. \label{fig:propertyL}}
\end{figure}

This defines a directed graph on the complement of $V$, the \emph{L-graph of $V$}, and we say that a USO has \emph{property~L} if for each of its vertices $V$, the L-graph of $V$ is acyclic. Our main result is that every D-cube has property~L. For example, this implies that the spinner in Figure~\ref{fig:uso}, despite being a P-cube, cannot be a D-cube, as property~L is violated at $\emptyset$ (the lower-left vertex). We also show that the number of USOs having property~L is doubly exponential, so once more, the combinatorial property is far from characterizing the geometric situation.

The paper is organized as follows. In Section~\ref{sec:L}, we formally introduce cube orientations and the L-graphs of such orientations. Section~\ref{sec:uso} reviews USOs and shows that if a cube orientation satisfies property~L, it is already a USO. This may come as a surprise since property~L only involves small local graphs, whereas the USO property involves (exponentially large) faces. We also establish a doubly exponential lower bound on the number of USOs with property~L. In Section~\ref{sec:P}, we introduce P-cubes and in particular review their construction from P-LCPs, due to Stickney and Watson~\cite{stickney}. Section~\ref{sec:D} contains our main result, a proof that every D-cube has property~L. In Section~\ref{sec:Dmisc}, 
we show that, starting from dimension $4$, there are USOs with no isomorphic copy satisfying property~L, and we provide systematic constructions of such USOs. This shows that property~L is \emph{combinatorially nontrivial}: it cannot always be attained by simply relabeling cube vertices.
But in the $3$-dimensional case, an isomorphic copy with property~L can always be found.
Section~\ref{sec:conclusion} provides open problems and further research questions.

\section{Cube orientations, L-graphs and property~L}\label{sec:L}
We closely follow the notation of Sz\'abo and Welzl~\cite{szabo2001unique}. For sets $U,V$, define the \emph{symmetric difference} $U\oplus V = (U\cup V)\setminus (U\cap V)$. For $U\subseteq W$, define the \emph{interval} $[U,W] = \{V: U\subseteq V\subseteq W\}$. 

A \emph{cube orientation} is a directed graph $\O$ with vertex set $\vert\O=[U,W]$, for some interval $[U,W]$, that contains exactly one of the directed edges $(V,V\oplus\{i\})$ and
$(V\oplus\{i\},V)$ for every $V\in\vert\O$ and every $i\in \carr\O := W\setminus U$ (the \emph{carrier} of $\O$). We also write $V\rightarrow_{\O}V'$ if $\O$ contains the directed edge $(V,V')$, or simply $V\rightarrow V'$ if $\O$ is clear from the context. The \emph{dimension} of $\O$ is $|\carr\O|$. We call $\O$ an \emph{$n$-cube orientation} if $\vert\O=[\emptyset,[n]]$, where $[n]:=\{1,2,\ldots,n\}$.

The \emph{outmap} of $\O$ is the function $\phi_{\O}:\vert\O\rightarrow 2^{\carr\O}$ defined by
\[
  \phi_{\O}(V) = \{i\in\carr\O: V \rightarrow_{\O} V\oplus\{i\}\}, \quad V\in\vert\O.
\]
Hence, the set $\phi_{\O}(V)$ contains the cube dimensions along which $V$ has outgoing edges in $\O$.

Next we define the central objects of this paper.

\begin{definition}
  Let $\O$ be a cube orientation. For each vertex $V\in\vert\O=[U,W]$, the \emph{L-graph of $V$}, 
  denoted by $\L_{\O}(V)$, is the directed graph with vertex set $W\setminus V$, and with an arc
  $(i,j)$ for $i, j\in W\setminus V, i\neq j$, whenever
  \begin{equation}\label{eq:Lgraph}
    j\in \phi_{\O}(V)\oplus \phi_{\O}(V\cup\{i\}).
\end{equation}
\end{definition}

In words, the L-graph of $V$ contains the arc $(i,j)$ if exactly one of $V$ and $V\cup\{i\}$ has an outgoing edge along dimension~$j$; see Figure~\ref{fig:Lgraph}. Note that the term ``directed edge'' is used in connection with cube orientations, while ``arc'' is used in connection with L-graphs.

\begin{figure}[htb]
\begin{center}  
  \includegraphics[width=0.5\textwidth]{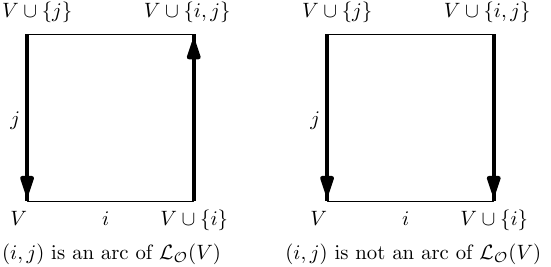}
\end{center}
\caption{Definition of the L-graph.\label{fig:Lgraph}} 
\end{figure}

The graph $\L_{\O}(W)$ is empty, and all graphs of the form $\L_{\O}(W\setminus\{i\})$ are a single vertex. Hence, L-graphs are interesting only when $|V|\leq |W|-2$. Figure~\ref{fig:Lgraphs} shows a 4-cube orientation and four of its L-graphs. 
In later figures, we draw the arcs of the L-graphs as red (double) arrows between the corresponding dimensions of the cube orientation instead of explicitly drawing the L-graphs; see Figure \ref{fig:2dcubes}. 

\begin{figure}[htb]
\begin{center}  
  \includegraphics[width=0.8\textwidth]{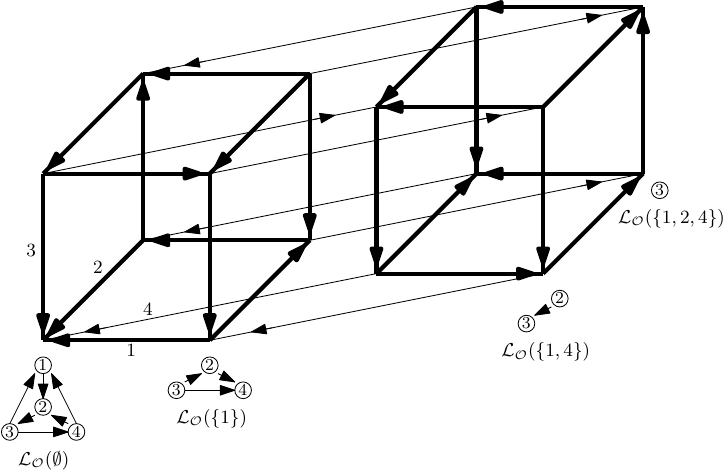}
\end{center}
\caption{A 4-cube orientation and four of its 16 L-graphs. \label{fig:Lgraphs}} 
\end{figure}

The arc(s) connecting $i$ and $j$ in $\L_{\O}(V)$ are determined by a $2$-dimensional cube orientation. Up to isomorphism, there are four different such orientations: the \emph{eye}, the \emph{bow}, the \emph{twin peak}, and the \emph{cycle}; see Figure~\ref{fig:2dcubes}. The terms eye and bow are due to Szab\'o and Welzl~\cite[Figure 4]{szabo2001unique}.

\begin{figure}[htb]
\begin{center}  
  \includegraphics[width=\textwidth]{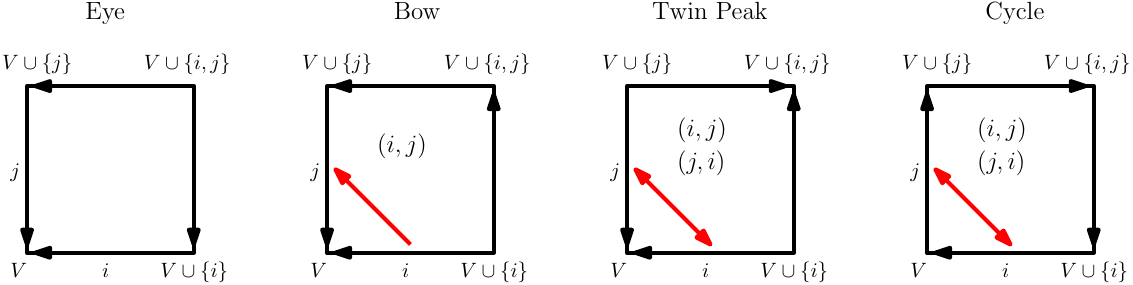}
\end{center}
\caption{Possible arcs between $i$ and $j$ in $\L_{\O}(V)$. \label{fig:2dcubes}} 
\end{figure}

For an eye, there are no arcs between $i$ and $j$. For a bow, there is exactly one of the arc, and for the twin peak and the cycle, both arcs are present. 

Figure~\ref{fig:3dcubes} depicts two $3$-cube orientations and their (nontrivial) L-graphs of
$V=\emptyset,\{1\},\{2\},\{3\}$. We see here that the left one, the spinner, has a cyclic L-graph, while the right one does not. This may be surprising, given that the two orientations are isomorphic. L-graphs have a ``sense of direction'': for every vertex $V$, we only use the ``higher-dimensions'' $i\notin V$ to define $\L_{\O}(V)$. Under an automorphism, a higher-dimension at $V$ may become a lower-dimension in the isomorphic image of $V$, and vice versa. In fact, the automorphism in Figure~\ref{fig:3dcubes} precisely flips the higher-lower status of dimension $1$ at all vertices. 

\begin{figure}[htb]
\begin{center}  
  \includegraphics[width=0.6\textwidth]{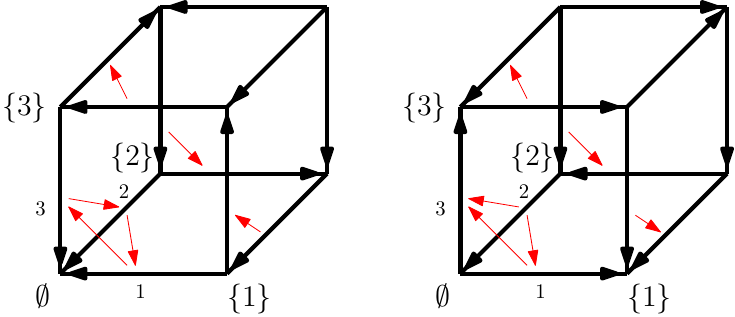}
\end{center}
\caption{L-graphs of two isomorphic 3-cube orientations may not be isomorphic. \label{fig:3dcubes}} 
\end{figure}

Here is the central definition of this paper:

\begin{definition}
An $n$-cube orientation $\O$ has \emph{property~L} if all of its L-graphs are acyclic.
\end{definition}

Consider the 2-cube orientations of Figure~\ref{fig:2dcubes} with $V=\emptyset,i=1,j=2$. The eye and the bow have property~L. For the twin peak and the cycle, the L-graphs of $\emptyset$ each contain a cycle $1\rightarrow 2\rightarrow 1$, so the twin peak and the cycle do not satisfy property~L. Now consider Figure~\ref{fig:3dcubes}. The 3-cube orientation on the right satisfies property~L, but the left one, the spinner, does not because there is
a cycle $1\rightarrow 3\rightarrow 2\rightarrow 1$ in the L-graph of $\emptyset$.

Property~L turns out to be invariant under the reversal of the direction of all arcs along
a given set $R$ of dimensions. Let us formally introduce this operation,
as we will need it later.

\begin{definition}\label{def:reversal}
For a cube orientation $\O$ and $R\subseteq\carr\O$, let $\O\oplus
R$ be the cube orientation $\O'$ with $\vert\O'=\vert\O$ and outmap
\begin{equation}\label{eq:reverseR}
\phi_{\O'} (V) = \phi_{\O}(V)\oplus R, \quad V\in \vert\O'.
\end{equation}
\end{definition}

In Figure~\ref{fig:2dcubes}, we can see this in action: the twin peak
and the cycle can be obtained from each other by reversing
the edges along the vertical dimension. In contrast, the isomorphism
between the two $3$-cube orientations in Figure~\ref{fig:3dcubes} is
not of this type: it does reverse all edges in dimension $1$, but on
top of that, it also flips the two sides of the cube along this dimension.

\begin{observation}\label{obs:reversal} 
Let $\O$ be a cube orientation, $R\subseteq\carr\O$, and $\O' = \O\oplus
R$. Then $\O$ has property~L if and only if $\O'$ has property~L.
\end{observation}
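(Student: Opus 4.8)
The plan is to prove something a little stronger than the statement, namely that reversal along $R$ leaves \emph{every} L-graph completely unchanged (not just its acyclicity status). Concretely, I would show that $\L_{\O'}(V) = \L_{\O}(V)$ for every vertex $V\in\vert\O$, as directed graphs. Note first that this makes sense: by Definition~\ref{def:reversal} we have $\vert\O' = \vert\O = [U,W]$, so for each $V$ the two L-graphs have the same vertex set $W\setminus V$, and it only remains to compare their arc sets.

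The key step is a one-line symmetric-difference computation. Fix $V\in[U,W]$ and distinct $i,j\in W\setminus V$ (so that $V\cup\{i\}\in\vert\O$ as well). By~\eqref{eq:reverseR}, $\phi_{\O'}(V) = \phi_{\O}(V)\oplus R$ and $\phi_{\O'}(V\cup\{i\}) = \phi_{\O}(V\cup\{i\})\oplus R$. Since $\oplus$ is associative and commutative and $R\oplus R=\emptyset$,
\[
  \phi_{\O'}(V)\oplus\phi_{\O'}(V\cup\{i\}) \;=\; \bigl(\phi_{\O}(V)\oplus R\bigr)\oplus\bigl(\phi_{\O}(V\cup\{i\})\oplus R\bigr) \;=\; \phi_{\O}(V)\oplus\phi_{\O}(V\cup\{i\}).
\]
Hence the membership condition~\eqref{eq:Lgraph} that decides whether $(i,j)$ is an arc evaluates identically for $\O$ and $\O'$, so the arc $(i,j)$ belongs to $\L_{\O'}(V)$ if and only if it belongs to $\L_{\O}(V)$. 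As $V$, $i$, $j$ were arbitrary, $\L_{\O'}(V) = \L_{\O}(V)$. The observation then follows immediately: $\O$ has property~L iff each $\L_{\O}(V)$ is acyclic iff each $\L_{\O'}(V)$ is acyclic iff $\O'$ has property~L.

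There is essentially no obstacle here; the only points that need care are bookkeeping. One should check that $\O'$ is genuinely a cube orientation, so that the statement is even well posed: for each $V$ and $i\in\carr\O$, exactly one of $i\in\phi_{\O}(V)$ and $i\in\phi_{\O}(V\oplus\{i\})$ holds because $\O$ is a cube orientation, and XOR-ing both of these bits with the common bit $[\,i\in R\,]$ preserves the fact that exactly one of them holds, which is precisely the defining condition for $\O'$. The other point, already noted above, is that $\vert\O'=\vert\O$ guarantees the two L-graphs of a given $V$ live on the same vertex set.
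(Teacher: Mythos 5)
Your proof is correct and is essentially the paper's own argument: both establish that the reversal cancels in the symmetric difference $\phi_{\O'}(V)\oplus\phi_{\O'}(V\cup\{i\}) = \phi_{\O}(V)\oplus\phi_{\O}(V\cup\{i\})$, so the L-graphs of $\O$ and $\O'$ coincide and acyclicity is trivially preserved. The extra check that $\O'$ is indeed a cube orientation is harmless bookkeeping the paper leaves implicit.
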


\begin{proof}
For all $V\in \vert\O$ and $i\in\carr\O$, we have $\phi_{\O}(V)\oplus
\phi_{\O}(V\cup\{i\}) = \phi_{\O'}(V)\oplus
\phi_{\O'}(V\cup\{i\})$ as a consequence of (\ref{eq:reverseR}), so according to 
(\ref{eq:Lgraph}), $\O$ and $\O'$ have the same L-graphs.
\end{proof}

\section{Unique sink orientations}\label{sec:uso}
We turn our attention to \emph{unique sink orientations} (USOs), the subclass of cube orientations of interest. To define USOs, we need the concept of a face. 
  
\begin{definition}
  A \emph{face} (or subcube) of a cube orientation $\O$ is a directed subgraph of $\O$
  induced by an interval $\I\subseteq\vert\O$. A face is proper if
  $\I\neq\vert\O$.
\end{definition}

Faces are cube orientations themselves.

\begin{definition}
A cube orientation $\O$ is a \emph{unique
    sink orientation} (USO) if every face of $\O$ has a unique sink (a vertex with no outgoing edge).
\end{definition}

In particular, since the whole cube is itself a face, a USO has a
unique global sink. Among the four cube orientations depicted in
Figure~\ref{fig:2dcubes}, the eye and the bow are USOs, while the two
others are not USOs. The twin peak has two global sinks, while the cycle has
no global sink. (Faces of dimension $0$ (vertices) and $1$ (edges)
automatically have unique sinks.) 

The spinner in Figure~\ref{fig:3dcubes} (left) is an example of a
cyclic USO without property~L. But reversing all edges along one of
the dimensions yields an acyclic USO without property~L, by
Observation~\ref{obs:reversal}. On the other hand, the isomorphic copy
of the spinner in Figure~\ref{fig:3dcubes} (right) is cyclic but has
property~L. Hence, property~L (acyclicity of all L-graphs) is at least
not obviously related to acyclicity of the USO itself.

The first result of this paper is as follows. 

\begin{theorem}\label{thm:sufficient}
If a cube orientation $\O$ has property~L, then $\O$ is a USO.
\end{theorem}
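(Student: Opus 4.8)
The plan is to recall the standard characterization of USOs due to Sz\'abo and Welzl: a cube orientation $\O$ is a USO if and only if its outmap $\phi_{\O}$ is a bijection from $\vert\O$ to $2^{\carr\O}$, or equivalently (and more usefully here) if and only if for every pair of distinct vertices $V,V'\in\vert\O$, the set $(\phi_{\O}(V)\oplus\phi_{\O}(V'))\cap(V\oplus V')$ is nonempty. I will prove the contrapositive in this second form: assuming $\O$ is \emph{not} a USO, I exhibit a vertex $V$ whose L-graph $\L_{\O}(V)$ contains a directed cycle, contradicting property~L. So suppose there are distinct $V,V'$ with $S:=(\phi_{\O}(V)\oplus\phi_{\O}(V'))\cap(V\oplus V')=\emptyset$. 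Without loss of generality (replacing $\O$ by $\O\oplus R$ for a suitable $R\subseteq\carr\O$, which by Observation~\ref{obs:reversal} preserves property~L and, one checks, preserves being/not being a USO since $\phi$ is only shifted by $R$), I may assume $V\subseteq V'$, and then by passing to the face $[V,V']$ I may assume $V=\emptyset$ and $V'=\carr\O=[m]$ for some $m$; the failure of the USO condition on this pair says exactly $\phi_{\O}(\emptyset)\oplus\phi_{\O}([m])=\emptyset$, i.e. $\phi_{\O}(\emptyset)=\phi_{\O}([m])$.

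The combinatorial heart of the argument is then to show: if $\O$ is an $m$-cube orientation with $\phi_{\O}(\emptyset)=\phi_{\O}([m])$, then $\L_{\O}(\emptyset)$ has a directed cycle. Here is the mechanism I would use. Consider walking from $\emptyset$ up to $[m]$ one dimension at a time. Along any maximal chain $\emptyset = V_0 \subset V_1 \subset \cdots \subset V_m = [m]$ with $V_t = V_{t-1}\cup\{i_t\}$, the total change in the outmap telescopes: $\phi_{\O}(\emptyset)\oplus\phi_{\O}([m]) = \bigoplus_{t=1}^{m}\bigl(\phi_{\O}(V_{t-1})\oplus\phi_{\O}(V_t)\bigr)$. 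Now $\phi_{\O}(V_{t-1})\oplus\phi_{\O}(V_t)$ always contains $i_t$ (the edge along $i_t$ is flipped when we move from $V_{t-1}$ to $V_{t-1}\cup\{i_t\}$, by definition of the outmap), and any \emph{other} dimension $j\neq i_t$ it contains gives exactly an arc $(i_t,j)$ in $\L_{\O}(V_{t-1})$. Since $\phi_{\O}(\emptyset)=\phi_{\O}([m])$, each of the $m$ ``own'' coordinates $i_1,\dots,i_m$ must be cancelled an even number of times total, and since $i_t$ appears once as the ``own'' flip at step $t$, it must appear at least once more as a ``foreign'' flip at some other step $s\neq t$; that is, for each $t$ there is some $s\neq t$ with an arc $(i_s, i_t)$ in $\L_{\O}(V_{s-1})$. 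But all the vertices $V_0,\dots,V_{m-1}$ share the property $i\notin V$ only up to index issues — this is where I need to be careful, since arcs in $\L_{\O}(V_{s-1})$ live on the vertex set $[m]\setminus V_{s-1}$, not all on $[m]$.

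The hard part, and the step I expect to be the main obstacle, is precisely reconciling the telescoping cancellation argument with the fact that the L-graph arcs generated at different levels of the chain live in \emph{different} L-graphs $\L_{\O}(V_0),\L_{\O}(V_1),\dots$, whereas property~L is a statement about each single L-graph in isolation. The clean way to handle this, which I would pursue, is \emph{not} to walk up a full chain but to argue directly inside $\L_{\O}(\emptyset)$: I want to show that the map sending each $i\in[m]$ to ``some $j$ such that $(j,i)$ is an arc of $\L_{\O}(\emptyset)$'' is total, so that $\L_{\O}(\emptyset)$ has minimum in-degree $\ge 1$ and hence contains a cycle. To get this, fix $i\in[m]$ and consider the restriction of $\O$ to the $2$-face spanned by $\emptyset,\{j\},\{i\},\{i,j\}$ as $j$ ranges over $[m]\setminus\{i\}$, together with the parity constraint coming from $\phi_{\O}(\emptyset)=\phi_{\O}([m])$ applied not to a chain but via the identity $\phi_{\O}(\emptyset)\oplus\phi_{\O}([m]) = \bigoplus_{j} \bigl(\phi_{\O}(\{j\})\oplus\phi_{\O}(\emptyset)\bigr) \oplus (\text{higher-order terms})$ — i.e. inclusion–exclusion over the Boolean lattice. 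Making this inclusion–exclusion bookkeeping go through so that the coordinate $i$ is forced to be ``hit'' by an arc into $i$ in $\L_{\O}(\emptyset)$ specifically (rather than in some intermediate L-graph) is the delicate point; I would expect to need the full strength of property~L at several vertices $V$ with $|V|$ small, or alternatively an induction on $m$ that peels off one dimension and uses the USO property already established for $(m-1)$-faces. Once every vertex of $\L_{\O}(\emptyset)$ has an incoming arc, a directed cycle exists, contradicting property~L and completing the contrapositive.
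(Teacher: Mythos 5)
There is a genuine gap, in fact two. First, your ``WLOG $V\subseteq V'$'' step does not work: replacing $\O$ by $\O\oplus R$ reverses edge directions but does not relabel vertices, so it cannot turn an arbitrary violating pair into a nested pair; the operation that would (mirroring $V\mapsto V\oplus F$) is a cube automorphism, and property~L is \emph{not} preserved under automorphisms --- this is precisely a central point of the paper (Figure~\ref{fig:3dcubes}, Section~\ref{sec:Dmisc}), so that reduction is unavailable. Second, and more seriously, the combinatorial heart of your plan is false as stated: $\phi_{\O}(\emptyset)=\phi_{\O}([m])$ does \emph{not} force a cycle (or even a single arc) in $\L_{\O}(\emptyset)$. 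Take $m=3$ and orient every edge between levels $0$--$1$ and $1$--$2$ downwards and every edge between levels $2$--$3$ upwards. Then $\phi(\emptyset)=\phi(\{1,2,3\})=\emptyset$ (two antipodal sinks, so not a USO, and the violating pair is even nested), $\phi(\{i\})=\{i\}$, and $\phi(\{i,j\})=\{1,2,3\}$; consequently $\L_{\O}(\emptyset)$ has no arcs at all, while the cyclic L-graphs sit at the singletons $\{i\}$ (the $2$-faces $[\{i\},\{1,2,3\}]$ are twin peaks). So your in-degree argument at the bottom vertex of the violating face cannot succeed, and the step you yourself flag as ``the delicate point'' (reconciling the telescoping/parity bookkeeping, which spreads arcs over many different L-graphs, with acyclicity of a single L-graph) is exactly where the proof is missing.

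The missing idea, which is how the paper localizes everything into one L-graph, is to pass not to the face spanned by an arbitrary violating pair but to a \emph{minimal} non-USO face, i.e.\ a pseudo USO, and to use its rigid structure: a pseudo USO has zero or exactly two global sinks and all outdegrees of the same parity. By a reversal $\O\oplus R$ (which, unlike mirroring, preserves both property~L and the pseudo-USO property, and in fact leaves every L-graph unchanged) one may assume the \emph{bottom} vertex $V$ of that face is a global sink; the parity of outdegrees then lets one walk a directed cycle that stays within the first two levels above $V$, alternating between vertices $V\cup\{i\}$ and $V\cup\{i,j\}$, and every edge of that cycle translates into an arc of the single L-graph $\L_{\O}(V)$, which is therefore cyclic. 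Without the minimality of the face and the reversal placing a sink at its bottom vertex, the cycle you are after need not project into the L-graph of the vertex you chose, as the counterexample above shows.
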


We have already ``proved'' Theorem \ref{thm:sufficient} for the $2$-dimensional case with
Figure~\ref{fig:2dcubes}. Indeed, the 2-dimensional cube orientations with property
$L$ (the eye and the bow) are USOs. The converse of
Theorem~\ref{thm:sufficient} is false in dimension $n\geq 3$. For
example, the spinner in Figure~\ref{fig:uso} is a 3-dimensional USO
that does not have property~$L$.

For the proof of Theorem~\ref{thm:sufficient}, we employ the concept
of \emph{pseudo unique sink orientations} (pseudo USOs), which are minimal
non-USOs. A pseudo USO is a cube orientation that does not have a
unique global sink, but every proper face has a unique
sink~\cite[Definition 2]{PUSO}. Every cube orientation that is not
a USO contains a face that is a $m$-dimensional pseudo USO, for some $m\geq 2$.

It turns out that pseudo USOs have a surprisingly rigid structure. Among other properties, a pseudo USO has either no or exactly two global sinks~\cite[Corollary 6]{PUSO}; moreover, all outdegrees have the same parity~\cite[Lemma 8]{PUSO}. This allows us to prove the following

\begin{lemma}\label{lem:pseudo USOcycle}
Let $\O$ be a $m$-dimensional pseudo USO, $m\geq 3$, with $\vert\O=[V,W]$ and a global sink at $V$. Then $\O$ contains a directed cycle among the set of vertices of the form $V\cup\{i\}$ and $V\cup\{i,j\}$ for $i,j\in W\setminus V$. 
\end{lemma}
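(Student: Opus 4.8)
The plan is to reduce the statement to the existence of a directed cycle in the L-graph $\L_\O(V)$, and then to produce the desired cube cycle by ``unfolding'' the L-graph cycle. Via the canonical relabeling $X\mapsto X\setminus V$, under which $[V,W]$ becomes $[\emptyset,W\setminus V]$, the global sink at $V$ becomes a global sink at $\emptyset$, and the vertices $V\cup\{i\}$, $V\cup\{i,j\}$ become $\{i\}$, $\{i,j\}$, so I may assume $V=\emptyset$ and $W=[m]$; thus $\O$ is an $m$-cube orientation, a pseudo USO, with global sink $\emptyset$. Since $\emptyset$ is a sink we have $\phi_\O(\emptyset)=\emptyset$, so by definition $(i,j)$ is an arc of $\L_\O(\emptyset)$ exactly when $j\in\phi_\O(\{i\})$, i.e.\ exactly when $\{i\}\to\{i,j\}$. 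Moreover $\L_\O(\emptyset)$ has no $2$-cycle: arcs $(i,j)$ and $(j,i)$ would give $\{i\}\to\{i,j\}$ and $\{j\}\to\{i,j\}$, which together with $\{i\}\to\emptyset$ and $\{j\}\to\emptyset$ would make $\{i,j\}$ a second sink in the $2$-face $[\emptyset,\{i,j\}]$; but $m\geq 3$ forces this face to be proper, hence a USO with a unique sink, a contradiction.

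Next I would show $\L_\O(\emptyset)$ contains a directed cycle. The out-degree of a vertex $i$ in $\L_\O(\emptyset)$ equals $|\phi_\O(\{i\})\setminus\{i\}|$. Because $\emptyset$ is a sink, $\{i\}\to\emptyset$, hence $i\in\phi_\O(\{i\})$, so this out-degree is $\outdeg_\O(\{i\})-1$. Now I invoke the parity lemma for pseudo USOs~\cite[Lemma 8]{PUSO}: since $\outdeg_\O(\emptyset)=0$ is even, all outdegrees of $\O$ are even, and therefore every out-degree in $\L_\O(\emptyset)$ is odd, in particular at least $1$. A finite directed graph in which every vertex has out-degree at least $1$ contains a directed cycle; as $\L_\O(\emptyset)$ has no loops and no $2$-cycles, this cycle $i_1\to i_2\to\cdots\to i_k\to i_1$ has $k\geq 3$.

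It remains to unfold this cycle inside $\O$. For each index $\ell$ (read cyclically), the arc $(i_\ell,i_{\ell+1})\in\L_\O(\emptyset)$ gives $\{i_\ell\}\to\{i_\ell,i_{\ell+1}\}$, while the absence of the reverse arc $(i_{\ell+1},i_\ell)$ gives $i_\ell\notin\phi_\O(\{i_{\ell+1}\})$, i.e.\ $\{i_\ell,i_{\ell+1}\}\to\{i_{\ell+1}\}$. Concatenating, $\{i_1\}\to\{i_1,i_2\}\to\{i_2\}\to\{i_2,i_3\}\to\cdots\to\{i_k\}\to\{i_k,i_1\}\to\{i_1\}$ is a closed directed walk in $\O$; since the $i_\ell$ are pairwise distinct and $k\geq 3$, the $2k$ vertices $\{i_\ell\}$ and $\{i_\ell,i_{\ell+1}\}$ occurring in it are pairwise distinct, so it is a directed cycle, and it uses only vertices of the form $\{i\}$ and $\{i,j\}$, i.e.\ $V\cup\{i\}$ and $V\cup\{i,j\}$ in the original coordinates.

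I expect the main obstacle to be the first conceptual step: recognizing that $\L_\O(\emptyset)$ is the right object and that the hypothesis ``$\emptyset$ is a sink'' is doing double duty --- it pins $\phi_\O(\emptyset)=\emptyset$, and it forces the ``diagonal'' element $i$ into $\phi_\O(\{i\})$, which is precisely what lets the parity lemma push all L-graph out-degrees strictly above $0$. The secondary delicate point is the unfolding step, where the downward edges of the cube cycle come out correctly oriented only because $\L_\O(\emptyset)$ has no $2$-cycle --- and that is the one place where $m\geq 3$ is used.
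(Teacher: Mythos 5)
Your proof is correct, and it rests on exactly the same two structural facts as the paper's proof---the parity lemma for pseudo USOs (outdegree of the sink $V$ is $0$, hence all outdegrees are even) and the unique-sink property of the proper $2$-faces $[V,V\cup\{i,j\}]$, which is where $m\geq 3$ enters. The packaging differs: the paper constructs the cycle directly by a greedy walk in the cube (from any $V\cup\{i,j\}$ descend to some $V\cup\{i\}$ using the $2$-face's unique sink, then use evenness of the outdegree of $V\cup\{i\}$ to ascend along a new dimension, and let finiteness close the walk into a cycle), whereas you argue globally in $\L_{\O}(V)$: every vertex there has out-degree $\outdeg_{\O}(V\cup\{i\})-1\geq 1$ by parity, there are no $2$-cycles by the $2$-face argument, hence a directed cycle of length at least $3$ exists, which you then unfold into the cube. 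What your route buys is that it produces the cyclic L-graph at the sink directly, so when the lemma is applied in the proof of Theorem~\ref{thm:sufficient} the translation step from the cube cycle back to arcs of $\L_{\O}(V)$ (Figure~\ref{fig:lcycle}) becomes unnecessary; it also yields an explicitly simple cycle of even length at least $6$. What the paper's walk buys is brevity and independence from the L-graph formalism, keeping the lemma a purely cube-level statement. One small point worth making explicit in your write-up: the unfolding needs the \emph{absence} of the reverse arc for every arc on the chosen cycle, which you correctly get from the no-$2$-cycle claim, so that claim is doing work twice (forcing $k\geq 3$ and orienting the downward edges), just as you note.
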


\begin{proof}
  Consider any vertex of the form $V\cup\{i,j\}$. Since the proper
  face of $\O$ induced by the interval
  $[V,V\cup \{i,j\}]=\{V, V\cup \{i\}, V\cup \{j\}, V\cup \{i,j\}\}$ has a
  unique sink (namely $V$), $V\cup\{i,j\}$ has an outgoing edge to one of
  $V\cup \{i\}$ and $V\cup \{j\}$. W.l.o.g.\
  suppose that it has an outgoing edge to $V\cup \{i\}$. The vertex $V\cup \{i\}$ in turn has an
  outgoing edge to the global sink $V$, but since all
  outdegrees in $\O$ are even ($V$ has outdegree $0$, and all outdegrees have the same parity), there is another outgoing edge to some $V\cup \{i,k\}$ for $k\neq j$ (the edge from $V\cup \{i,j\}$ was incoming). Now we repeat the argument from $V\cup \{i,k\}$. Continuing in this way, we eventually construct a directed cycle.
\end{proof}

\begin{proof}[Proof of Theorem~\ref{thm:sufficient}]
  We show the contraposition: if $\O$ is not a USO, then it has a cyclic L-graph, so $\O$ fails to have property~L. To this end, suppose that $\O$ is not a USO. Then $\O$ contains a face $\F$ that is a $m$-dimensional pseudo USO for some $m\geq 2$. Suppose that $\F$ is induced by the interval $[V,W]$. Since reversing edges along any set of dimensions $R$ neither affects property~L (Observation~\ref{obs:reversal}) nor the pseudo USO property~\cite[Lemma 8]{PUSO}, we may w.l.o.g.\ assume that $V$ is a global sink of $\F$. If $m=2$, then $\F$ is a twin peak, and the L-graph $\L_{\O}(V)$ contains a directed cycle of length $2$; see Figure~\ref{fig:2dcubes}. If $m\geq 3$, Lemma~\ref{lem:pseudo USOcycle} yields the existence of a directed cycle
  \[V\cup\{i_0\}\rightarrow V\cup\{i_0,i_1\} \rightarrow V\cup\{i_1\} \rightarrow \cdots  \rightarrow  V\cup\{i_{\ell}\} \rightarrow  V\cup\{i_{\ell},i_0\} \rightarrow V\cup\{i_0\}.
    \]
in $\F$. As $V$ is the sink of $\F$, the situation looks like Figure~\ref{fig:lcycle} for all $t=0,\ldots\ell$ (we define $i_{\ell+1}=i_0$). Consequently, $\L_{\O}(V)$ contains all the arcs $(i_t,i_{t+1}), t=0,\ldots,\ell$ and hence a directed cycle.
\begin{figure}[htb]
\begin{center}  
  \includegraphics[width=0.2\textwidth]{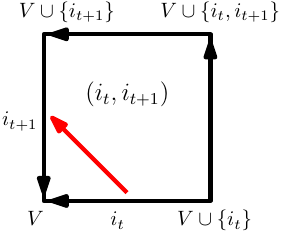}
\end{center}
\caption{Proof that $\L_{\O}(V)$ contains a directed cycle. \label{fig:lcycle}} 
\end{figure}
\end{proof}

We conclude this section by showing that there are doubly exponentially many $n$-cube USOs with property~L; hence,  in ``weeding out'' non-geometric USOs, our new property~L is not more efficient than the previously known ones.

\begin{theorem}\label{thm:counting}
There are at least $2^{2^n-1}$ $n$-cube USOs with property~L.
\end{theorem}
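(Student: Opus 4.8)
The plan is to produce a large family of USOs with property~L via a recursive \emph{combing} construction in one fixed dimension, say $n$, and then to verify that combing preserves property~L. For any two $(n-1)$-cube orientations $\O_0,\O_1$ (both living on $[\emptyset,[n-1]]$) and a direction $d\in\{\text{down},\text{up}\}$, I would define $\O=C_d(\O_0,\O_1)$ as follows: put $\O_0$ on the facet $F_0=\{V:n\notin V\}$, put $\O_1$ on the facet $F_1=\{V:n\in V\}$ (identified with $[\emptyset,[n-1]]$ via $V\mapsto V\cup\{n\}$), and orient \emph{every} edge in dimension~$n$ towards $F_0$ (if $d=\text{down}$) or towards $F_1$ (if $d=\text{up}$). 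These orientations are pairwise distinct, since from $\O$ one reads off $d$ (all dimension-$n$ edges agree by construction) and then recovers $\O_0$ and $\O_1$ as the orientations induced on the two facets.

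The core step is the claim that $C_d(\O_0,\O_1)$ has property~L if and only if $\O_0$ and $\O_1$ both do. Since $C_{\text{up}}(\O_0,\O_1)=C_{\text{down}}(\O_0,\O_1)\oplus\{n\}$, Observation~\ref{obs:reversal} reduces this to $d=\text{down}$; write $\O=C_{\text{down}}(\O_0,\O_1)$. I would then read off the L-graphs of $\O$ directly from~(\ref{eq:Lgraph}), splitting on whether the base vertex lies in $F_0$ or in $F_1$. For a base vertex $V\subseteq[n-1]$: because all dimension-$n$ edges point towards $F_0$, neither $V$ nor $V\cup\{i\}$ (for $i\in[n-1]$) has an outgoing dimension-$n$ edge, so $\phi_{\O}$ agrees with $\phi_{\O_0}$ on these vertices, and the arcs of $\L_{\O}(V)$ among the dimensions in $[n-1]\setminus V$ are exactly those of $\L_{\O_0}(V)$; moreover the extra vertex $n$ of $\L_{\O}(V)$ has no incoming arc (the relevant symmetric differences stay inside $[n-1]$), hence is a source and irrelevant to acyclicity, so $\L_{\O}(V)$ is acyclic iff $\L_{\O_0}(V)$ is. For a base vertex $X\cup\{n\}$ with $X\subseteq[n-1]$: now both $X\cup\{n\}$ and $(X\cup\{i\})\cup\{n\}$ \emph{do} have an outgoing dimension-$n$ edge, so $n$ cancels out of the symmetric difference in~(\ref{eq:Lgraph}) and $\L_{\O}(X\cup\{n\})$ is literally $\L_{\O_1}(X)$. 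Combining the two cases, every L-graph of $\O$ is acyclic exactly when every L-graph of $\O_0$ and of $\O_1$ is.

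To conclude, I would note that once $\O$ has property~L it is automatically a USO by Theorem~\ref{thm:sufficient}, so there is no need to argue USO-ness of the combing separately. Letting $L_n$ be the number of $n$-cube USOs with property~L, the construction yields $L_n\ge 2L_{n-1}^2$ for $n\ge 1$ (two choices of $d$, and $L_{n-1}$ choices for each of $\O_0,\O_1$, all producing distinct orientations), with $L_1=2$ (both orientations of an edge). Setting $a_n=\log_2 L_n$, this reads $a_n+1\ge 2(a_{n-1}+1)$ with $a_1+1=2$, whence $a_n+1\ge 2^{n-1}(a_1+1)=2^n$, i.e.\ $L_n\ge 2^{2^n-1}$.

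I expect the only genuinely non-routine part to be the L-graph computation in the core step: carefully tracking the dimension-$n$ coordinate through the symmetric difference in~(\ref{eq:Lgraph}) in the two cases, and observing that the ``new'' vertex $n$ appearing in the L-graphs of base vertices of $F_0$ is always a source and therefore cannot break acyclicity. The combing reduction for USOs is bypassed entirely by invoking Theorem~\ref{thm:sufficient}, and the final counting is elementary.
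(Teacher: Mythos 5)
Your proposal is correct and takes essentially the same route as the paper: there, the bound is obtained by counting \emph{recursively combed} USOs (all dimension-$n$ edges oriented the same way, both facets recursively combed), which satisfy the same recurrence $r_1=2$, $r_n=2r_{n-1}^2$, and property~L is verified by noting that no L-graph has an arc into the combed dimension while all other arcs are inherited from facet L-graphs. Your combing construction with the explicit source-vertex analysis is just a more detailed version of that verification (and recursing over \emph{all} property-L USOs of dimension $n-1$ rather than only recursively combed ones can only enlarge the family), so the argument and the final count match the paper's.
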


\begin{proof}
  The bound is attained by the class of \emph{recursively combed} $n$-cube USOs,
  which also provides the best known asymptotic lower bound for the
  number of acyclic USOs~\cite{Mat:The-Number}. A recursively combed
  $n$-cube USO $\O$ has all edges along dimension $n$
  oriented in the same way: either all of them go
  ``up'' ($V\rightarrow V\cup\{n\}$), or all of them go ``down''
  ($V\cup\{n\}\rightarrow V$). Moreover, the two facets with carrier
  $[n-1]$ (``lower'' and ``upper'' facet) are recursively combed USOs
  as well. For $n=1$, every USO is recursively combed. It follows that
  the number $r_n$ of recursively combed $n$-cube USOs satisfies the
  recurrence relation $r_1=2$ and $r_n=2r_{n-1}^2$. This solves to
  $r_n=2^{2^n-1}$.

  It is easy to see that all recursively combed USOs have
  property~L. Indeed, since all directed edges along dimension $n$ have the
  same direction, no L-graph contains any arcs of the form
  $i\rightarrow n$. As the arcs not involving $n$ are contained in
  L-graphs of the lower or the upper facet, it inductively follows
  that there are no arcs of the form $i\rightarrow j$ for
  $i<j$. Hence, all L-graphs are acyclic.
\end{proof}

The number of USOs is still significantly larger than
$2^{2^{n-1}}$, namely $n^{\Theta (2^n)}$~\cite{Mat:The-Number}. To
construct this many USOs, we can start with the \emph{uniform}
USO ($V\cup\{i\}\rightarrow V$ for all $V$ and all $i\notin V$) and
then reverse all edges in a matching. The result is called a
\emph{matching-reversal} USO. The lower bound then follows from a
bound for the number of matchings in the $n$-cube~\cite{Mat:The-Number}. We remark that this
construction may yield USOs without property~L; for example, the spinner in
Figure~\ref{fig:uso} is a
matching-reversal USO. We do not know whether the lower bound in
Theorem~\ref{thm:counting} can be significantly improved.

\section{P-cubes}\label{sec:P}
Stickney and Watson~\cite{stickney} were the first to show that every P-matrix
linear complementarity problem reduces to finding the global sink in a
USO. In this section, we briefly review their construction.

Given a matrix $M\in\R^{n\times n}$ and a vector $\qq\in\R^n$, the
  linear complementarity problem $\LCP(M,\qq)$ is to find vectors
  $\ww,\zz\in\R^n$ such that
  \begin{eqnarray*}
    \ww-M\zz&=&\qq, \\
    \ww,\zz&\geq& \nula, \\
    \ww^\top\zz &=& 0.
  \end{eqnarray*}
 While the first two conditions can be satisfied by solving a linear
 program, the third one makes the problem hard in general. More precisely, it is
 NP-complete to decide whether there is a
 solution~\cite{Chung:Hardness}.

 But if $M$ is a P-matrix (all principal minors are positive), the decision problem becomes
 trivial, because then there is a unique solution for every
 $\qq$~\cite{STW}. The problem of finding the unique vectors $\ww,\zz$
 has unknown complexity status. It is unlikely to be
 NP-hard~\cite{Meg:A-Note-on-the-Complexity}, but also no
 polynomial-time algorithm is known. The problem falls into a number
 of more recent complexity classes, namely PPAD, PLS, CLS, and
 UniqueEOPL~\cite{DBLP:journals/corr/abs-1811-03841}, but it is not
 known to be complete for any of these classes.

 The reduction to USO is as follows. First, observe that
 $\ww,\zz\geq\nula$ and $\ww^\top\zz=0$ together imply that for
 every $i\in[n]$, one of $w_i$ and $z_i$ is zero. Suppose that for some
 $V\subseteq[n]$, we set $w_i=0$ if $i\in V$ and $z_i=0$
 if $i\notin V$. Then there are unique values $w_i,$ for $i\notin V$ and $z_i$ for $i\in
 V$ such that $\ww-M\zz=\qq$. Indeed, taking the prescribed zeros into
 account, we must have $-M_{V,V}\zz_{V}=\qq_{V}$, where $M_{V,V}$ is the principal
 submatrix of $M$ with rows and columns indexed by $V$, and $\zz_V$ is the subvector
 of $\zz$ with entries indexed by $V$. 

 By definition of a P-matrix, $M_{V,V}$ has positive determinant and
 hence is invertible, so the missing (non-prescribed) $\zz$-entries
 $z_i$ for $i\in V$ are uniquely determined. This in turn determines
 the missing $\ww$-entries via $\ww=M\zz+\qq$.

 Hence, the problem of solving $\LCP(M,\qq)$ reduces to ``guessing'' 
 a set $S\subseteq[n]$ (an $n$-cube vertex) such that the missing entries of 
 $\ww$ and $\zz$ are nonnegative. 

 If $\qq$ is \emph{generic} (w.r.t.\ $M$), meaning that no missing entry is $0$, 
 we can turn this guesswork into finding the global sink in a suitably
 defined $n$-cube USO, with edge orientations defined by the signs of
 the missing entries. If $\zz$ and $\ww$ (uniquely) solve
 $\ww-M\zz=\qq; w_i=0, i\in V; z_i=0, i\notin V$, then we define
 \begin{equation}\label{eq:Parrows}
  V \rightarrow V\oplus\{i\} \quad \Leftrightarrow \quad z_i<0 \mbox{~or~} w_i<0.
\end{equation}
 To do this algebraically, we first formulate one system of
 equations that directly gives us the missing entries of
 $\ww$ and $\zz$ for a given $V$. When we write $\ww-M\zz=\qq$ as
 $I\ww-M\zz=\qq$, where $I$ is the $(n\times n)$ identity matrix,
 then we see that the $n$ missing entries can be obtained by solving
 $I_{\overline{V}}\ww_{\overline{V}}-M_V\zz_V =\qq$, where
 $\overline{V}=[n]\setminus V$ and matrix subscripts select colums. With $x_i=z_i$ if
 $i\in V$ and $x_i=w_i$ if $i\not\in V$, this is equivalent to solving
 the system $M(V)\xx=\qq$, where the $i$-th column $M(V)_i$ of matrix $M(V)$ is given by
 \begin{equation}\label{eq:AB}
   M(V)_i = \left\{\begin{array}{rl}
                     -M_i, & i\in V, \\
                     I_i, & i\notin V.
                   \end{array}\right.
               \end{equation}
               
As we know that the missing entries are uniquely
determined, the matrix $M(V)$ is invertible for all $V$. Hence, the outmap corresponding to
the orientation in (\ref{eq:Parrows}) is 
\begin{equation}\label{eq:Pcube}
  \phi(V) = \{i\in[n]: (M(V)^{-1}\qq)_i < 0\}, \quad V\subseteq[n].
\end{equation}
Stickney and Watson have shown that $\phi$ is the outmap of an
$n$-cube USO; its unique sink is the (unique) right guess
for $S$~\cite{stickney}.

\begin{definition}\label{def:P-cube}
A $n$-cube USO $\O$ is called a \emph{P-cube} if its outmap
is of the form given by (\ref{eq:AB}) and (\ref{eq:Pcube}) for some P-matrix
$M\in\R^{n\times n}$ and generic $\qq\in\R^n$.
\end{definition}

For example, the spinner in Figure~\ref{fig:uso} is a P-cube,
generated by
\begin{equation}\label{eq:spinnerMq}
 M = \left(\begin{array}{rrr}
              1 & 2& 0 \\
              0 & 1 & 2 \\
              2 & 0 & 1
            \end{array}\right),
  \quad \qq = \left(\begin{array}{r} 1 \\ 1 \\ 1
                    \end{array}\right).
\end{equation}

\section{D-cubes}\label{sec:D}
We now consider the subclass of P-cubes that arise from
\emph{symmetric} $P$-matrices. It turns out that these matrices are
exactly the positive definite ones, the symmetric matrices such that
$\xx^\top M\xx> 0$ for all $\xx\in\R^n$. The fact that a symmetric
P-matrix is positive definite follows from \emph{Sylvester's
  criterion}, according to which a matrix is positive definite if and
only if all its \emph{leading} principal minors are positive. Since a
symmetric P-matrix has all principal minors positive, it is positive definite. On the other hand, if we have a symmetric positive
definite matrix, then in fact \emph{all} principal minors are
positive, so we have a symmetric P-matrix. Indeed, by permuting rows
and columns in the same way, every principal minor can be made a leading
principal minor of another symmetric positive definite matrix and is hence
positive.

\begin{definition}
A $n$-cube USO $\O$ is called a \emph{D-cube} if its outmap
is of the form given by (\ref{eq:AB}) and (\ref{eq:Pcube}) for some
symmetric positive definite matrix $M\in\R^{n\times n}$ and generic $\qq\in\R^n$.
\end{definition}

Here is the main result of the paper:

\begin{theorem}\label{thm:main}
Every D-cube $\O$ has property~L.
\end{theorem}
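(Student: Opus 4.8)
The plan is to make the arcs of every L-graph $\L_{\O}(V)$ completely explicit in terms of $M$ and $\qq$, so that acyclicity of $\L_{\O}(V)$ reduces to a short inequality supplied by positive definiteness of a Schur complement.

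I would first record the relevant data of a D-cube given by a symmetric positive definite $M\in\R^{n\times n}$ and generic $\qq$. For any $S\subseteq[n]$, the basic solution at the vertex $S$ is $\xx^S:=M(S)^{-1}\qq$, and $\phi_{\O}(S)=\{i:x^S_i<0\}$. Solving $M(S)\xx=\qq$ blockwise gives $\xx^S_S=-(M_{SS})^{-1}\qq_S$ and $\xx^S_{\bar S}=\qq_{\bar S}-M_{\bar S S}(M_{SS})^{-1}\qq_S$, where $\bar S:=[n]\setminus S$; equivalently, for $j\notin S$, $x^S_j$ is the $j$-th partial derivative at $\zz^S$ of $F(\zz):=\tfrac12\zz^{\top}M\zz+\qq^{\top}\zz$, where $\zz^S$ minimizes $F$ over $\{\zz:z_k=0,\ k\notin S\}$. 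Now fix a vertex $V$, abbreviate $g_k:=x^V_k$ for $k\notin V$, and let $\hat M:=M_{\bar V\bar V}-M_{\bar V V}(M_{VV})^{-1}M_{V\bar V}$ be the Schur complement of $M_{VV}$ in $M$: a symmetric positive definite matrix indexed by $\bar V$. The one computation that does some work is the effect of enlarging the basis by a single index $i\notin V$: comparing the first-order conditions satisfied by $\zz^V$ and $\zz^{V\cup\{i\}}$ and using that their difference is supported on $V\cup\{i\}$ yields the one-step update
\[
  x^{V\cup\{i\}}_j \;=\; g_j-\frac{\hat M_{ij}}{\hat M_{ii}}\,g_i,\qquad j\in\bar V\setminus\{i\}.
\]
By definition of the L-graph, $(i,j)$ is an arc of $\L_{\O}(V)$ precisely when $x^V_j$ and $x^{V\cup\{i\}}_j$ have opposite signs (genericity of $\qq$ rules out zeros). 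Multiplying through by $\hat M_{ii}>0$, this becomes the clean criterion
\[
  (i,j)\in\L_{\O}(V)\qquad\Longleftrightarrow\qquad g_i\,g_j\,\hat M_{ij}\;>\;\hat M_{ii}\,g_j^{\,2}\;\;(>0).
\]

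With the arc condition in this multiplicative form, acyclicity is a telescoping argument. Suppose $\L_{\O}(V)$ had a directed cycle through distinct $i_1,\dots,i_k\in\bar V$ (indices mod $k$). Dividing the inequality for the arc $(i_t,i_{t+1})$ by $g_{i_{t+1}}^2>0$ gives $\tfrac{g_{i_t}}{g_{i_{t+1}}}\hat M_{i_t i_{t+1}}>\hat M_{i_t i_t}>0$, hence $\big|\tfrac{g_{i_t}}{g_{i_{t+1}}}\big|\,|\hat M_{i_t i_{t+1}}|>\hat M_{i_t i_t}$. Taking the product over $t=1,\dots,k$, the $g$-ratios telescope to $1$, leaving $\prod_t|\hat M_{i_t i_{t+1}}|>\prod_t\hat M_{i_t i_t}$. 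But positive definiteness of $\hat M$ makes every $2\times2$ principal minor positive, so $|\hat M_{ab}|<\sqrt{\hat M_{aa}\hat M_{bb}}$ for all $a\neq b$; multiplying this around the cycle gives $\prod_t|\hat M_{i_t i_{t+1}}|<\prod_t\sqrt{\hat M_{i_t i_t}\hat M_{i_{t+1}i_{t+1}}}=\prod_t\hat M_{i_t i_t}$, because each index contributes to exactly two cycle edges---a contradiction. As $V$ was arbitrary, $\O$ has property~L.

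The routine part is the blockwise solve of $M(S)\xx=\qq$; the step I expect to require the most care is the single-pivot update formula, together with the realisation that the matrix governing it is the Schur complement $\hat M$ of $M_{VV}$---symmetric and positive definite---and not $M$ itself. This is exactly what keeps the $2\times2$-minor inequality available at \emph{every} vertex $V$, not merely at $V=\emptyset$ (where $\hat M=M$ and $g=\qq$, and the criterion above reduces to $q_i q_j m_{ij}>m_{ii}q_j^2$). One cannot sidestep this by shifting the origin to $V$: the principal pivot transform of a symmetric positive definite matrix need not be symmetric, so D-cubes are not closed under vertex relabellings and the Schur complement has to be brought in directly. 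Genericity of $\qq$ is used only to ensure $g_j\neq0$ for the relevant $j$, and by Observation~\ref{obs:reversal} no generality is lost in the sign bookkeeping.
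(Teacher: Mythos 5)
Your proof is correct, but it is organized quite differently from the paper's. The paper splits the work in two: Lemma~\ref{lem:main} handles $V=\emptyset$ by combining the arc condition with the \emph{absence} of the reverse arc (USO-ness of the $2$-face), which yields the chained inequality $0<m_{ss}q_t^2<m_{tt}q_s^2$ along directed paths of $\L_{\O}(\emptyset)$; the general case is then done by induction on $n$, invoking Stickney--Watson's pivoting property that the facet $[\{k\},[n]]$ is the P-cube of $M(\{k\})^{-1}M$ and checking that the resulting one-pivot Schur complement $m'_{ij}=m_{ij}-m_{ik}m_{kj}/m_{kk}$ is again symmetric positive definite, so that $\L_{\O}(V)$ becomes the empty-vertex L-graph of a smaller D-cube. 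You instead treat every vertex $V$ uniformly and without induction: you compute the basic solution $g=\xx^V$ and the single-pivot update $x^{V\cup\{i\}}_j=g_j-\frac{\hat M_{ij}}{\hat M_{ii}}g_i$ directly in terms of the full Schur complement $\hat M$ of $M_{VV}$ (this update formula is correct and is essentially the pivoting fact the paper cites), obtain the explicit arc criterion $g_ig_j\hat M_{ij}>\hat M_{ii}g_j^2>0$, and refute any cycle by a telescoping product contradicted by $|\hat M_{ab}|<\sqrt{\hat M_{aa}\hat M_{bb}}$, i.e.\ positivity of $2\times 2$ principal minors. Notably, your acyclicity mechanism (Cauchy--Schwarz around the cycle) replaces the paper's use of the reverse-arc exclusion, so you never need to invoke that the orientation is a USO, and you handle $2$-cycles and longer cycles in one stroke; the price is that the blockwise solve and the pivot-update identity must be verified by hand, where the paper leans on Stickney--Watson and a short computation at $\emptyset$. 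Both arguments ultimately rest on the same fact, that Schur complements of symmetric positive definite matrices are symmetric positive definite; the paper's inductive route additionally makes explicit that D-cubes are closed under taking faces (a fact reused in its concluding discussion), which your direct argument does not need and does not deliver.
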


This rules out that the spinner is a D-cube, but some isomorphic copies of it could still be D-cubes. Indeed, Stickney and Watson~\cite{stickney} have already shown that this is the case. With
\[
  M = \left(\begin{array}{rrr}
              5 &-10 & 2 \\
              -10 & 41 & -6 \\
              2 & -6 & 1
            \end{array}\right),
  \quad \qq = \left(\begin{array}{r} 1 \\ -7 \\ 1
                    \end{array}\right),
\]
we obtain the D-cube in Figure~\ref{fig:cyclic_D_cube}, an isomorphic copy of the spinner. 

\begin{figure}[htb]
\begin{center}  
  \includegraphics[width=0.2\textwidth]{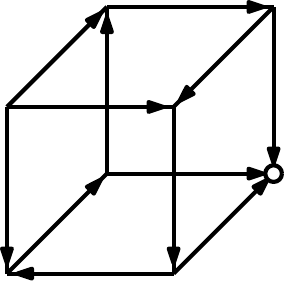}
\end{center}
\caption{A D-cube that is an isomorphic copy of the spinner (which is not a D-cube). \label{fig:cyclic_D_cube}} 
\end{figure}

Our main step towards the proof is Lemma \ref{lem:main} below. It generalizes an ad-hoc argument that was used to show that the spinner is not a D-cube~\cite{Sonoko}.
\begin{lemma}\label{lem:main}
If $\O$ is a D-cube, then the L-graph $\L_{\O}(\emptyset)$ is acyclic. 
\end{lemma}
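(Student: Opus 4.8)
The plan is to make the condition ``$(i,j)$ is an arc of $\L_{\O}(\emptyset)$'' completely explicit in terms of $M$ and $\qq$, and then to contradict a hypothetical cycle using nothing more than positivity of the $2\times 2$ principal minors of $M$. (The symmetric P-LCP is the KKT system of a strictly convex quadratic program, and passing from $\emptyset$ to $\{i\}$ amounts to minimizing along the $i$-th coordinate axis; but the bare algebra below is cleaner than the optimization picture.)

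First I would evaluate the two outmaps involved. Since $M(\emptyset)=I$, equation~(\ref{eq:Pcube}) gives $\phi_{\O}(\emptyset)=\{k:q_k<0\}$. Solving $M(\{i\})\xx=\qq$ directly---its $i$-th column is $-M_i$ and every other column is a standard basis vector---yields $x_i=-q_i/m_{ii}$ and $x_j=q_j-(m_{ji}/m_{ii})q_i$ for $j\neq i$; using $m_{ji}=m_{ij}$ and $m_{ii}>0$ (positive definiteness), this shows that for $j\neq i$ we have $j\in\phi_{\O}(\{i\})$ iff $q_j-(m_{ij}/m_{ii})q_i<0$. Hence, for $i\neq j$, the arc $(i,j)$ is present exactly when $q_j$ and $q_j-(m_{ij}/m_{ii})q_i$ have opposite sign; both are nonzero because $\qq$ is generic. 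A one-line sign analysis then shows that the presence of $(i,j)$ forces
\[
\frac{|m_{ij}|}{m_{ii}}\,|q_i|\;>\;|q_j| .
\]
(It also forces $m_{ij}q_iq_j>0$, which we will not need.)

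Now suppose, for contradiction, that $\L_{\O}(\emptyset)$ has a directed cycle; take a simple one and write it as $i_1\to i_2\to\cdots\to i_k\to i_1$ with $k\geq 2$ and the $i_t$ distinct. Applying the displayed inequality to each arc $i_t\to i_{t+1}$ (indices mod $k$) and multiplying the $k$ inequalities, the nonzero product $\prod_t|q_{i_t}|$ cancels and we are left with
\[
\prod_{t=1}^{k}\frac{|m_{i_t i_{t+1}}|}{m_{i_t i_t}}\;>\;1 .
\]
Rewriting the left-hand side symmetrically as $\prod_{t=1}^{k}|m_{i_t i_{t+1}}|/\sqrt{m_{i_t i_t}\,m_{i_{t+1} i_{t+1}}}$ (each diagonal entry occurs once as a ``left'' and once as a ``right'' factor around the cycle), each factor is the modulus of an off-diagonal entry of the correlation matrix associated with the principal submatrix $M_{\{i_t,i_{t+1}\},\{i_t,i_{t+1}\}}$. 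That submatrix is symmetric positive definite, so its determinant $m_{i_t i_t}m_{i_{t+1} i_{t+1}}-m_{i_t i_{t+1}}^2$ is positive, and every factor is strictly less than $1$. A product of finitely many numbers in $[0,1)$ is strictly less than $1$---contradiction. So $\L_{\O}(\emptyset)$ is acyclic.

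The only mildly delicate part is the sign bookkeeping that produces the magnitude inequality (getting its direction right, and invoking genericity to exclude zeros); after that the proof is a single cancellation plus the elementary $2\times 2$ minor bound, and $2$-cycles need no separate treatment since they are simply the case $k=2$.
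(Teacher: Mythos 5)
Your proof is correct, and while it starts from the same explicit computation as the paper (the outmap at $\emptyset$ is given by the signs of the $q_i$, and the outmap at $\{i\}$ by the signs of $q_j-\tfrac{m_{ij}}{m_{ii}}q_i$, exactly as in (\ref{eq:arcs})--(\ref{eq:Mj})), the contradiction mechanism is genuinely different. The paper uses the fact that $\O$ is a USO to conclude that the reverse arc $(t,s)$ is absent whenever $(s,t)$ is present, and combines the two sign conditions (\ref{eq:st}) and (\ref{eq:ts}) with symmetry and $m_{ss},m_{tt}>0$ to obtain the strict potential decrease $m_{ss}q_t^2<m_{tt}q_s^2$ along every arc, which immediately forbids cycles; note that beyond the USO property it only needs positive diagonal entries and symmetry, not full positive definiteness. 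You instead use only the \emph{presence} of each arc, extract the magnitude inequality $|q_j|<\tfrac{|m_{ij}|}{m_{ii}}|q_i|$, multiply around a hypothetical cycle, and kill the resulting product with the strict $2\times 2$ principal-minor bound $m_{ij}^2<m_{ii}m_{jj}$. So you trade the paper's use of the USO structure (no reverse arcs) for a stronger use of positive definiteness (off-diagonal minors); in effect your per-arc inequality combined with the minor bound recovers the same potential $q_v^2/m_{vv}$, but your argument is self-contained in that it never invokes the USO property of $\O$, and it treats $2$-cycles uniformly, whereas the paper excludes them via the eye/bow classification of $2$-faces. Both proofs rely on genericity of $\qq$ in the same way, to rule out zero entries before dividing, and your sign bookkeeping for the magnitude inequality is sound.
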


\begin{proof}
Suppose that $\O$ is induced by $M=(m_{ij})$ and $\qq$, where $M$ is symmetric positive definite. 

Consider $i,j\in[n]$ such that $i\neq j$. In $\O$, we have
  \begin{equation}\label{eq:arcs}
    \begin{array}{rclccclcllc}
      \emptyset&\rightarrow&\{i\} &\quad & \Leftrightarrow &\quad & (M(\emptyset)^{-1}\qq)_i &=& q_i&<&0, \\
      \{j\}&\rightarrow&\{i,j\} &\quad & \Leftrightarrow &\quad & (M(\{j\})^{-1}\qq)_i &=& q_i - \frac{m_{ij}}{m_{jj}}q_j&<&0,
    \end{array}
  \end{equation} 
  as a consequence of 
\begin{equation}\label{eq:Mj}
M(\{j\})^{-1} = \left(\begin{array}{ccccc}
                1 &             & -m_{1j} & &  \\
                  &\ddots    &  \vdots  & &     \\
                  &             &  -m_{jj} & &  \\
                  &             &  \vdots  & \ddots & \\
                  &             &  -m_{nj} & & 1
              \end{array}
    \right)^{-1} = \left(\begin{array}{ccccc}
                1 &             & -m_{1j}/m_{jj} & &  \\
                  &\ddots    &  \vdots  & &     \\
                  &             &  -1/m_{jj} & &  \\
                  &             &  \vdots  & \ddots & \\
                  &             &  -m_{nj}/m_{jj} & & 1
              \end{array}
    \right).
\end{equation}
  
Suppose that $(s,t)$ is an arc in $\L_{\O}(\emptyset)$, meaning that -- by definition~(\ref{eq:Lgraph}) of the local graph -- exactly one of $\emptyset\rightarrow\{t\}$ and $\{s\}\rightarrow\{s,t\}$ holds. Applying (\ref{eq:arcs}) with $i=t$ and $j=s$, we see that this is equivalent to
\begin{equation}\label{eq:st}
q_t (q_t - \frac{m_{ts}}{m_{ss}}q_s) = q_t^2 - \frac{m_{ts}}{m_{ss}}q_tq_s< 0.
\end{equation}
(Here we also use that $\qq$ is generic, meaning that no expressions in (\ref{eq:arcs}) can be $0$.) As $\O$ is a USO, $(t,s)$ is not an arc in $\L_{\O}(\emptyset)$ (recall Figure~\ref{fig:2dcubes}). In the same vein as before, we get that this is equivalent to
\begin{equation}\label{eq:ts}
q_s (q_s - \frac{m_{st}}{m_{tt}}q_t) =  q_s^2 - \frac{m_{st}}{m_{tt}}q_sq_t> 0.
\end{equation}
  
Since $M$ is symmetric positive definite, we have that $m_{ss},m_{tt}>0$ and $m_{st}=m_{ts}$, so (\ref{eq:st}) and (\ref{eq:ts}) imply that 
\begin{equation} \label{eq:6}
0< m_{ss}q_t^2<m_{tt}q_s^2.
\end{equation} 
Now consider a path $s\rightarrow t\rightarrow u$ in $\L_{\O}(\emptyset)$. On top of (\ref{eq:6}) we then also get
\[
  0<m_{tt}q_u^2<m_{uu}q_t^2,
\]
and multiplying these inequalities gives 
\[
m_{ss}q_t^2 m_{tt}q_u^2 < m_{tt}q_s^2 m_{uu}q_t^2 \quad \Leftrightarrow \quad 
m_{ss}q_u^2 < q_s^2 m_{uu}.
\]
Iterating this, we get that (\ref{eq:6}) not only holds when $s\rightarrow t$ but actually
whenever there is a directed path from $s$ to $t$. This implies that there cannot be a directed path from a vertex $s$ back to $s$, meaning that $\L_{\O}(\emptyset)$ is acyclic.
\end{proof}

Now we are ready to prove the main theorem: every $D$-cube $\O$ has property~L.

\begin{proof}[Proof of Theorem~\ref{thm:main}]
  We proceed by induction on $n$. For $n\leq 2$, every USO has property~L, so we are done. For $n\geq 3$, suppose that every $(n-1)$-dimensional D-cube satisfies property~L. Let $\O$ be an $n$-dimensional D-cube  induced by $M$ and $\qq$, where $M$ is symmetric positive definite.

  The graph $\L_{\O}(\emptyset)$ is acyclic by Lemma~\ref{lem:main}, so it remains to verify that $\L_{\O}(V)$ is acyclic for given $V \neq \emptyset$. Choose $k \in V$. Notice that $V$ is in the $(n-1)$-dimensional face $\F$ of $\O$ induced by $[\{k\},[n]]$, and that $\L_{\O}(V)=\L_{\F}(V)$. Renaming dimensions $K=[n]\setminus\{k\}$ to $[n-1]$ turns $\F$ into an $(n-1)$-cube orientation $\O'$, and $V$ into $V'\subseteq[n-1]$, so that
$\L_{\O}(V)$ is isomorphic to $\L_{\O'}(V')$.

Orientation $\O'$ is known to be a P-cube, induced by $M'_{K,K}$ (and suitable $\qq'_K$), where
  \begin{equation}\label{eq:Mprime}
    M'=M(\{k\})^{-1}M,
  \end{equation}
  with $M(\{k\})^{-1}$ as in (\ref{eq:Mj})~\cite[Property~5]{stickney}. We also claim that $M'_{K,K}$ is symmetric positive definite, which yields the desired result because it implies that $\O'$ is actually a D-cube of dimension $n-1$ that has property~L by the inductive hypothesis. Hence $\L_{\O'}(V')$ and $\L_{\O}(V)$ are acyclic, as desired. 

To show that $M'_{K,K}$ is symmetric positive definite, we first observe (simple calculations) that (\ref{eq:Mprime}) yields $M'=(m'_{ij})$ with 
\begin{equation}
m'_{ij} = m_{ij}-\frac{m_{ik}m_{kj}}{m_{kk}}, \quad i,j\neq k.
\end{equation}
Since $M$ is symmetric, it follows that $M'_{K,K}$ is symmetric. Furthermore, for $\xx\in\R^{n-1}$, we can easily verify that $\xx^T M'_{K,K}\xx = \yy^TM\yy\geq 0$, where $\yy=(x_1,\ldots,x_{k-1}, -(\sum_{i\neq k}m_{ki}x_i)/m_{kk}, x_{k+1},\ldots,x_n)$, and thus $M'_{K,K}$ is also positive definite.
\end{proof}
 
\section{Kaleidoscopes}\label{sec:Dmisc}

We have seen that property~L is in general not preserved under the application of a cube automorphism; see Figure~\ref{fig:3dcubes}. This begs the question: does every USO have an isomorphic copy that satisfies property~L? If so, property~L would in the above sense be satisfied by all USOs, and it would not really be justified to call the property ``combinatorial''.

In dimensions 3 and 4, the question can be answered by brute-force, using lists of all isomorphism classes of USOs. In dimension 3, there are $19$ such classes, and each one turns out to contain a member with property~L. But already in dimension 4, there are $9$ (out of $14614$) isomorphism classes that contain no member with property~L. Figure~\ref{fig:4d_non_L} depicts a 4-dimensional USO such that no isomorphic copy has property~L. 

\begin{figure}[htb]
\begin{center}  
  \includegraphics[width=0.5\textwidth]{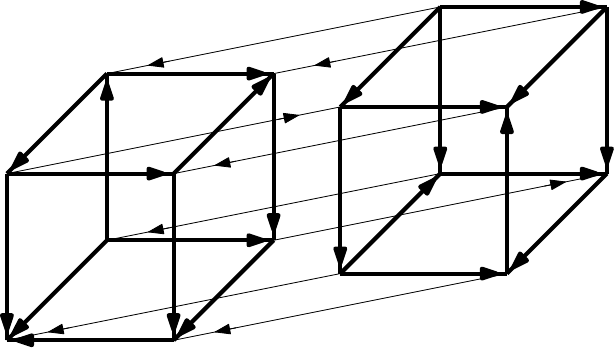}
\end{center}
\caption{A USO with no isomorphic copy having property~L.\label{fig:4d_non_L}} 
\end{figure}

In this section, we systematically construct such examples in higher dimensions. 
We show that for every $n$-cube USO, there is a $2n$-cube USO---a \emph{kaleidoscope}---that contains all ``mirror images'' of the former. And using this fact, we show that if a $n$-cube USO does not have property~L, then no isomorphic copy of the corresponding kaleidoscope has property~L. Separately, starting from any P-cube, we show how to construct a kaleidoscope that is also a P-cube. In particular, starting from the spinner, we construct a 6-dimensional P-cube with no isomorphic copy that satisfies property~L. 

Throughout this section, we mostly consider $n$-cube USOs and $2n$-cube USOs. Whenever we consider USOs of (some possibly other) general dimension, we will use $m$ to denote this dimension. It will also be convenient to slightly abuse notation and identify a USO with its outmap $\phi$. 

Two $m$-cube USOs $\psi$ and $\psi'$ are \emph{isomorphic} if there is a bijection $h:2^{[m]}\rightarrow 2^{[m]}$ (a cube automorphism) such that for all $V,V'\subseteq[m]$, we have $V\rightarrow_{\psi} V'$ if and only if $h(V)\rightarrow_{\psi'}h(V')$. For example, the map defined by $h(V)= V\oplus F$, for some $F \subseteq [m]$, is a cube automorphism. It maps a USO to one of its $2^m$ \emph{mirror images}: 

\begin{definition}\label{def:mirror}
  Let $\psi$ be an $m$-cube USO and $F\subseteq[m]$. The USO $\psi'$ defined by
  \begin{equation}\label{eq:mirror}
    \psi'(V) = \psi(V\oplus F), \quad V\subseteq [m],
  \end{equation}
is the \emph{mirror image} of $\psi$ along dimensions $F$.
\end{definition}

Figure \ref{fig:mirrors} depicts the $8$ mirror images of the spinner (one of which is the spinner itself, corresponding to $F = \emptyset$). Note that mirroring is not to be confused with reversing edges, $\psi'(V) = \psi(V)\oplus R$ (Definition~\ref{def:reversal}). While the latter operation also preserves the USO property~\cite[Lemma 2.1]{szabo2001unique}, it is in general not an automorphism.

\begin{figure}[htb]
\begin{center}  
  \includegraphics[width=0.7\textwidth]{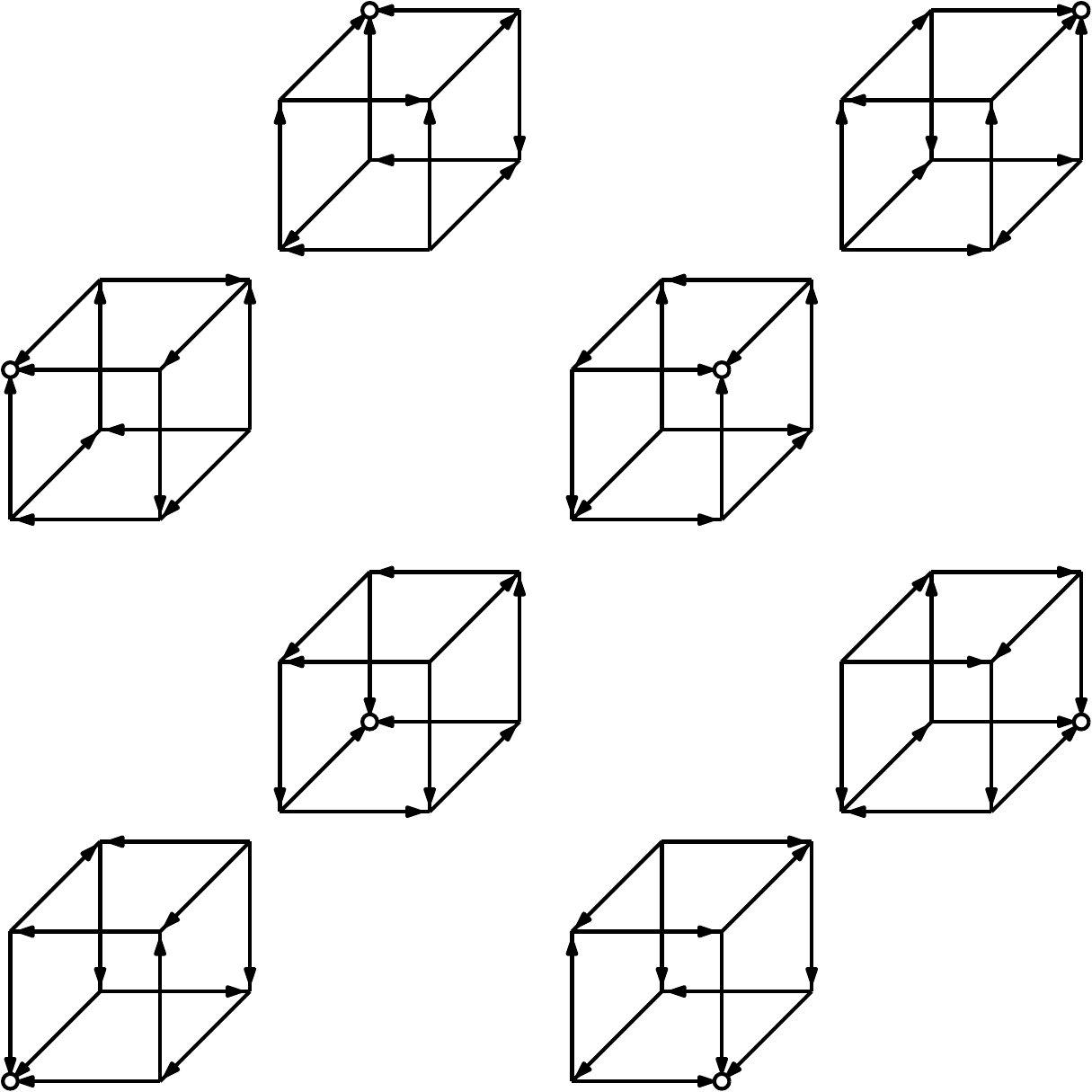}
\end{center}
\caption{The $8$ mirror images of the spinner (lower left corner).\label{fig:mirrors}} 
\end{figure}

Actually, all automorphisms can be described as a mirroring automorphism up to a permutation of the dimensions. That is, any $m$-cube automorphism $h$ is of the form
$$h(V)=\pi (V \oplus F), \quad V\subseteq[m],$$
where $F\subseteq[m]$ is the set of mirrored dimensions, and
$\pi:[m]\rightarrow[m]$ is a permutation (renaming) of the dimensions. This follows from the fact that a cube automorphism is determined by the images of the $m+1$ vertices $\emptyset$ and $\{i\}$ for $i\in[m]$.

Next we define the concept of a kaleidoscope, a USO that connects all mirror images of a given $n$-cube USO along $n$ new dimensions. For this, we introduce the following notation.
For a vertex (or outmap value) $V\subseteq[2n]$, $V_L=V\cap[n]$ denotes the lower dimensions, and $V_H=\{i-n: i\in V\cap\{n+1,\ldots,2n\}\}$ the upper dimensions, renamed such they also fall into the range $[n]$. For example, if $n=3$ and $V=\{1,2,4,6\}$, then $V_L=\{1,2\}$ and $V_H=\{1,3\}$. Note that $(U\oplus V)_L=U_L\oplus V_L$ and  $(U\oplus V)_H=U_H\oplus V_H$; we will use this in arguments below.

\begin{definition}
  Let $\phi$ be an $n$-cube USO and $\psi$ a $2n$-cube USO. The USO $\psi$ is a \emph{kaleidoscope} for $\phi$ if
  \begin{equation}\label{eq:kaleidoscope}
    \psi(V)_L = \phi (V_L \oplus V_H), \quad \forall~ V\subseteq [2n].
  \end{equation}
\end{definition}

Figure~\ref{fig:kaleidoscope} illustrates a kaleidoscope for the spinner, connecting the mirror images in Figure~\ref{fig:mirrors} along new dimensions $4,5,6$. In general, each of the $2^n$ subcubes with carrier $[n]$ of a kaleidoscope is a particular mirror image of $\phi$, so that $\psi$ contains all possible mirror images of $\phi$. More precisely, if $V\subseteq\{n+1,\ldots,2n\}$, then the subcube induced by the interval $[V,V\cup[n]]$ is the mirror image of $\phi$ along dimensions $V_H$. We will make this containment formal in Definition~\ref{def:contains} and Lemma~\ref{lem:kaleidoscope} below.

\begin{figure}[htb]
\begin{center}  
  \includegraphics[width=0.7\textwidth]{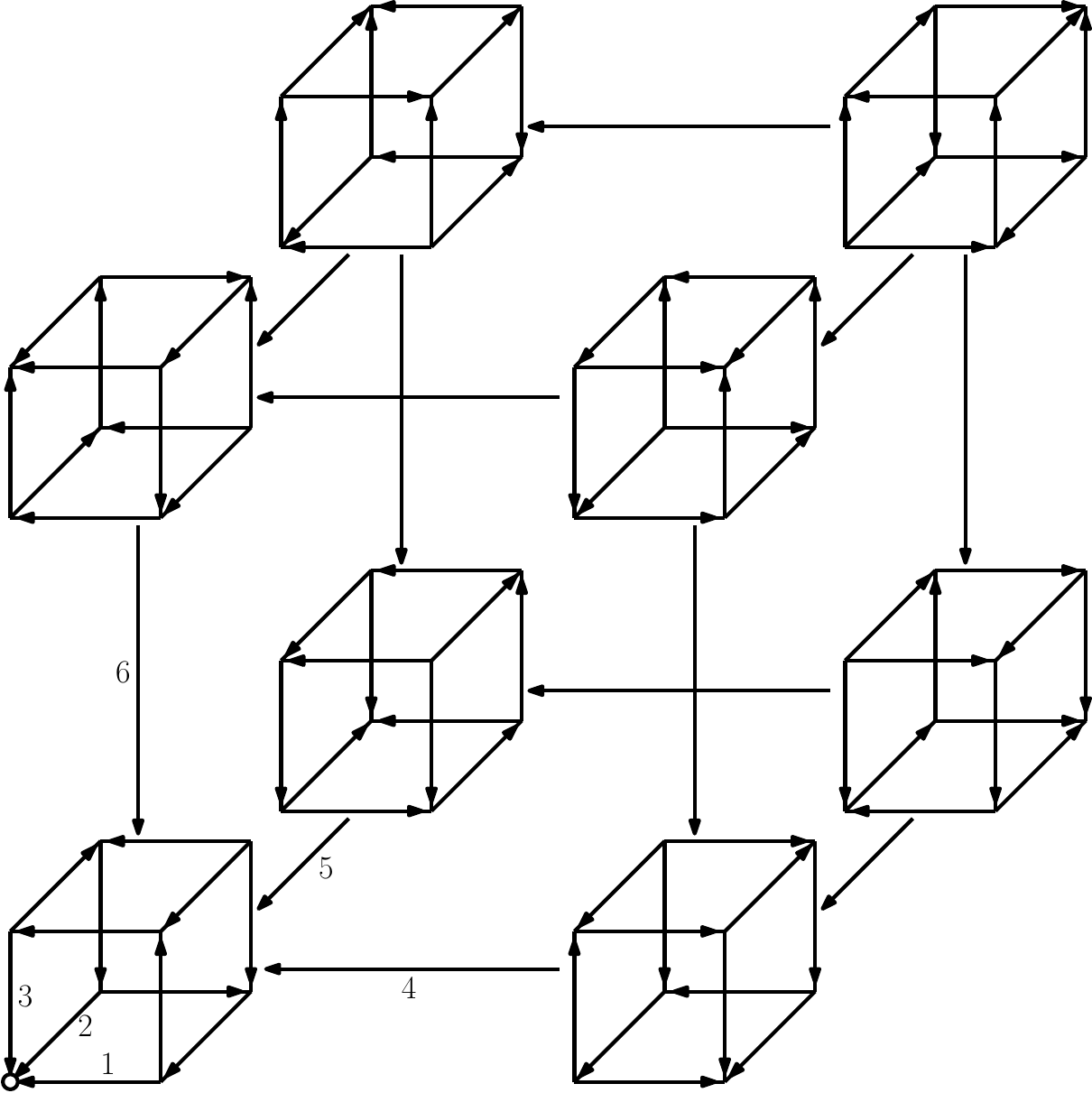}
\end{center}
\caption{The kaleidoscope for the spinner resulting from Lemma~\ref{lem:product_kaleidoscope}; all edges along dimensions $4,5,6$ are directed ``downwards'' (from larger to smaller sets).\label{fig:kaleidoscope}} 
\end{figure}

We remark that for every $\phi$, we can construct a kaleidoscope $\psi$. For example, if we set $\psi(V)_H=V_H$ for all $V$, this defines $\psi$ completely, together with (\ref{eq:kaleidoscope}), and the result is a USO; this is a special case of the \emph{product construction} due to Schurr and Szab\'o~\cite[Lemma 3]{SchSza:Finding}. For completeness, we provide a proof.

\begin{lemma}\label{lem:product_kaleidoscope}
  Let $\phi$ be a $n$-cube USO and define $\psi:2^{[2n]}\rightarrow 2^{[2n]}$
  by
  \[
  \begin{array}{rcl}
    \psi(V)_L &:=& \phi (V_L \oplus V_H),\\
    \psi(V)_H &:=& V_H. 
  \end{array}, \quad V \subseteq[2n].
\]
Then $\psi$ is a $2n$-cube USO and hence a kaleidoscope for $\phi$.
\end{lemma}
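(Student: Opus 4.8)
The plan is to verify the USO property of $\psi$ directly via the characterization of USOs through their outmaps: a map $\phi:2^{[m]}\to 2^{[m]}$ is the outmap of a USO if and only if for all $U\neq V$, the set $(\phi(U)\oplus\phi(V))\cap(U\oplus V)$ is nonempty (this is Szab\'o--Welzl's criterion, which we may take as known from the background we are drawing on). So I would fix two distinct vertices $U,V\subseteq[2n]$ and produce a dimension $i\in U\oplus V$ lying in $\psi(U)\oplus\psi(V)$. Write $D=U\oplus V$, and split $D=D_L\cup D_H$ into its part in $[n]$ and its part in $\{n+1,\dots,2n\}$; at least one of $D_L,D_H$ is nonempty.

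First I would handle the case $D_H\neq\emptyset$. Since $\psi(W)_H=W_H$ for every $W$, we have $\psi(U)_H\oplus\psi(V)_H = U_H\oplus V_H = (U\oplus V)_H = D_H\neq\emptyset$, using the identity $(U\oplus V)_H=U_H\oplus V_H$ noted in the text. Picking any $i-n\in D_H$ (with $i\in\{n+1,\dots,2n\}$), that dimension $i$ lies in $D$ and in $\psi(U)\oplus\psi(V)$, so we are done. The remaining case is $D_H=\emptyset$, i.e.\ $U_H=V_H$, and $D_L\neq\emptyset$. Here $U\oplus V\subseteq[n]$, so $D=D_L$. Now I would use that $\phi$ itself is a USO: apply the Szab\'o--Welzl criterion to the two vertices $a:=U_L\oplus U_H$ and $b:=V_L\oplus V_H$ of the $n$-cube. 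Because $U_H=V_H$, we get $a\oplus b = U_L\oplus V_L = D_L\neq\emptyset$, so $a\neq b$ and there is a dimension $i\in (a\oplus b)\cap(\phi(a)\oplus\phi(b))$. Then $i\in D_L\subseteq D$, and by the defining relation $\psi(W)_L=\phi(W_L\oplus W_H)$ we have $\psi(U)_L\oplus\psi(V)_L=\phi(a)\oplus\phi(b)\ni i$, hence $i\in\psi(U)\oplus\psi(V)$. This covers all cases, so $\psi$ is a USO; it satisfies \eqref{eq:kaleidoscope} by construction, so it is a kaleidoscope for $\phi$.

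The one point requiring a little care is the bookkeeping between dimensions in $\{n+1,\dots,2n\}$ and their renamed copies in $[n]$ — one must be consistent about whether a "dimension of $\psi$" means an element of $[2n]$ or its image under the $V\mapsto V_H$ renaming. The identities $(U\oplus V)_L=U_L\oplus V_L$ and $(U\oplus V)_H=U_H\oplus V_H$, already recorded before the definition of a kaleidoscope, are exactly what make this transparent, so I do not expect a genuine obstacle; the main step is simply recognizing that the Szab\'o--Welzl outmap criterion decomposes cleanly along the lower/upper split because $\psi(\cdot)_H$ is the identity on upper parts.

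(If one prefers to avoid invoking the outmap criterion, the same case split works directly with faces: a face of $\psi$ is an interval $[A,B]\subseteq 2^{[2n]}$; if $B_H\setminus A_H\neq\emptyset$ the face factors through dimensions on which $\psi$ is combed, reducing to a sub-face, and once $A_H=B_H$ the face is isomorphic, via $W\mapsto W_L\oplus W_H=W_L\oplus A_H$, to a face of $\phi$, which has a unique sink. But the outmap argument above is shorter.)
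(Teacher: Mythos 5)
Your proposal is correct and follows essentially the same route as the paper's proof: both verify the Szab\'o--Welzl outmap criterion $(\psi(U)\oplus\psi(V))\cap(U\oplus V)\neq\emptyset$, splitting into the case $U_H\neq V_H$ (handled by $\psi(\cdot)_H$ being the identity on upper parts) and the case $U_H=V_H$ (reduced to the criterion for $\phi$ at the vertices $U_L\oplus U_H$ and $V_L\oplus V_H$). No further comments are needed.
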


\begin{proof}
  According to \cite[Lemma 2.3]{szabo2001unique}, $\psi$ is a USO if and only if $(\psi(U)\oplus\psi(V))\cap (U\oplus V)\neq \emptyset$ for all $U\neq V$. In words, restricted to the subcube spanned by $U$ and $V$, the two vertices have different outmap values. We verify this property for $\psi$ as defined above. Fix $U\neq V$. If $U_H\neq V_H$, choose $i\in U_H\oplus V_H=\psi(U)_H\oplus \psi(V)_H=(\psi(U)\oplus\psi(V))_H=(U\oplus V)_H$. Hence, $i+n\in (\psi(U)\oplus \psi(V))\cap (U\oplus V))$. If $U_H=V_H=:W$, we have $U_L\neq V_L$ and $U_L\oplus W =: U' \neq V' := V_L\oplus W$. Since $\phi$ is a USO, there is $i\in (\phi(U')\oplus \phi(V'))\cap (U'\cap V')$, equivalently, 
  \[i\in (\psi(U)_L \oplus \psi(V)_L) \cap  (U_L\oplus V_L) = (\psi(U) \oplus \psi(V))_L \cap
 (U\oplus V)_L\subseteq (\psi(U)\oplus\psi(V))\cap (U\oplus V).\]
In both cases, $(\psi(U)\oplus\psi(V))\cap (U\oplus V)\neq \emptyset$.
\end{proof}

\begin{definition}\label{def:contains}
  Let $\phi$ be an $n$-cube USO and $\psi'$ a $2n$-cube USO. The USO $\psi'$ \emph{contains} $\phi$ if there is $V\subseteq\{n+1,\ldots,2n\}$ such that 
  \begin{equation}\label{eq:contains}
  \psi'(U)_L = \phi(U_L), \quad \forall~U\in [V,V\cup[n]].
  \end{equation}
\end{definition}
This means that the subcube of $\psi'$ induced by $[V,V\cup[n]]$ is a  ``translated copy'' of $\phi$.

\begin{lemma}\label{lem:kaleidoscope}
Suppose that $\psi$ is a kaleidoscope for the $n$-cube USO $\phi$. Let $F\subseteq[2n]$, and let $\psi'$ be the mirror image of $\psi$ along dimensions $F$. Then $\psi'$ contains $\phi$.
\end{lemma}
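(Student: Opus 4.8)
The goal is to show that the mirror image $\psi'$ of a kaleidoscope $\psi$ (for $\phi$) along any $F\subseteq[2n]$ still contains $\phi$ in the sense of Definition~\ref{def:contains}. The plan is to exhibit an explicit $V\subseteq\{n+1,\ldots,2n\}$ and verify~(\ref{eq:contains}) by unwinding the two definitions. First I would observe that, since the kaleidoscope property~(\ref{eq:kaleidoscope}) and the contains-property~(\ref{eq:contains}) are both phrased purely in terms of the $L$-part of the outmap, only the lower coordinates matter, and the mirroring operation interacts with the $L$/$H$ decomposition in the clean way recorded just before Definition~\ref{def:kaleidoscope}: $(U\oplus V)_L = U_L\oplus V_L$ and $(U\oplus V)_H = U_H\oplus V_H$. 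Write $F_L = F\cap[n]$ and $F_H = \{i-n : i\in F\cap\{n+1,\ldots,2n\}\}$.

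The key computation is: for any $U\subseteq[2n]$,
\[
\psi'(U)_L = \psi(U\oplus F)_L = \phi\bigl((U\oplus F)_L \oplus (U\oplus F)_H\bigr)
= \phi\bigl(U_L\oplus F_L\oplus U_H\oplus F_H\bigr),
\]
using~(\ref{eq:mirror}) and then~(\ref{eq:kaleidoscope}). So the task reduces to choosing $V\subseteq\{n+1,\ldots,2n\}$ so that, for every $U\in[V,V\cup[n]]$, the argument $U_L\oplus F_L\oplus U_H\oplus F_H$ equals $U_L$ — i.e.\ so that $U_H = F_L\oplus F_H$ on this interval. But for $U\in[V,V\cup[n]]$ we have $U\cap\{n+1,\ldots,2n\} = V$, hence $U_H = V_H$ is constant on the interval. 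So the right choice is the unique $V\subseteq\{n+1,\ldots,2n\}$ with $V_H = F_L\oplus F_H$, namely $V = \{i+n : i\in F_L\oplus F_H\}$. With this $V$, every $U\in[V,V\cup[n]]$ satisfies $U_H = F_L\oplus F_H$, so the argument above collapses to $\phi(U_L\oplus F_L\oplus F_H\oplus F_L\oplus F_H)=\phi(U_L)$, which is exactly~(\ref{eq:contains}).

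There is no real obstacle here — the proof is a short substitution — but the one place to be careful is bookkeeping with the $L$/$H$ renaming: one must check that $V = \{i+n : i\in F_L\oplus F_H\}$ genuinely lies in $2^{\{n+1,\ldots,2n\}}$ (it does, since $F_L\oplus F_H\subseteq[n]$) and that $V_H$ as defined equals $F_L\oplus F_H$ (immediate from the definition of the $H$-projection). I would also double-check that $\psi'$ is itself a $2n$-cube USO, but that is automatic since mirroring is a cube automorphism and USOs are closed under automorphisms. So the write-up is: define $V$, compute $\psi'(U)_L$ via the displayed chain, note $U_H=V_H=F_L\oplus F_H$ on the interval, and conclude.
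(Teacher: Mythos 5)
Your proof is correct and takes essentially the same route as the paper's: both pick the unique $V\subseteq\{n+1,\ldots,2n\}$ with $V_H=F_L\oplus F_H$ and then unwind the mirroring and kaleidoscope definitions, using $U_H=V_H$ on $[V,V\cup[n]]$, to obtain $\psi'(U)_L=\phi(U_L)$. (One cosmetic remark: the paper's kaleidoscope definition carries no label, so your cross-reference to it would not resolve as written.)
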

\begin{proof}
  Let $V\subseteq \{n+1,\ldots,2n\}$ be the unique set such that $V_H=F_L\oplus F_H$. We claim that the subcube induced by $[V,V\cup[n]]$ is the one carrying the translated copy of $\phi$. To check this, fix $U\in [V,V\cup[n]]$ and set $W=U\oplus F$. Then
$W_L=U_L\oplus F_L$ and $W_H=U_H\oplus F_H$. Furthermore,
  \begin{eqnarray*}
    \psi'(U)_L &\stackrel{(\ref{eq:mirror})}{=}& \psi (U\oplus F)_L = \psi (W)_L  \\
               &\stackrel{(\ref{eq:kaleidoscope})}{=}& \phi (W_L\oplus W_H)
                = \phi (U_L \oplus U_H \oplus F_L \oplus F_H) \\
                &=& \phi (U_L \oplus U_H \oplus V_H) = \phi (U_L),
  \end{eqnarray*}
using $U_H=V_H$ for $U\in [V,V\cup[n]]$. Hence, we have verified (\ref{eq:contains}). 
\end{proof}

Here is the first main result of this section.

\begin{theorem}\label{thm:kaleidoscope}
Suppose the $2n$-cube USO $\psi$ is a kaleidoscope for the $n$-cube USO $\phi$, and further suppose that $\phi$ fails to have property~L. Let $\psi'$ be any USO isomorphic to $\psi$. Then $\psi'$ does not satisfy property~L either.
\end{theorem}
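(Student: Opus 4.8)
The plan is to combine two facts: (i) property~L is \emph{not} preserved under arbitrary automorphisms, but by the structural analysis preceding Definition~\ref{def:mirror}, every automorphism $h$ of the $2n$-cube decomposes as $h(V) = \pi(V\oplus F)$ for a mirroring set $F\subseteq[2n]$ and a dimension permutation $\pi$; and (ii) by Lemma~\ref{lem:kaleidoscope}, for \emph{every} $F\subseteq[2n]$ the mirror image of $\psi$ along $F$ contains $\phi$ as a translated copy inside some subcube with carrier $[n]$. The strategy is therefore: write an arbitrary $\psi'$ isomorphic to $\psi$ as $\psi' = \pi(\psi \oplus F)$ (abusing notation, meaning $\psi'(V) = \pi(\psi(\pi^{-1}(V)\oplus F))$ or however the paper's convention dictates), peel off the permutation $\pi$ first, reduce to showing the mirror image $\psi\oplus F$ fails property~L, and then locate inside it a translated copy of $\phi$ on which a violating L-graph cycle survives.

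First I would handle the permutation. A dimension permutation $\pi$ is a relabeling of the cube dimensions that commutes with the L-graph construction: if $\psi''$ is obtained from $\psi'''$ by applying $\pi$, then $\L_{\psi''}(\pi(V))$ is $\pi$-isomorphic to $\L_{\psi'''}(V)$, because the defining condition~(\ref{eq:Lgraph}) only refers to dimensions symmetrically and $\pi$ acts as a graph isomorphism on vertex sets. Hence $\psi'$ has property~L if and only if the mirror image $\psi\oplus F$ (before relabeling) has property~L, so it suffices to show that \emph{no} mirror image of $\psi$ has property~L.

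Second, I would invoke Lemma~\ref{lem:kaleidoscope}: fix the mirror image $\tilde\psi$ of $\psi$ along $F$; then $\tilde\psi$ contains $\phi$, i.e. there is $V\subseteq\{n+1,\ldots,2n\}$ with $\tilde\psi(U)_L = \phi(U_L)$ for all $U\in[V,V\cup[n]]$. Now let $\G=[V,V\cup[n]]$ be this subcube of $\tilde\psi$, an $n$-cube orientation whose outmap, restricted to the lower $n$ dimensions, is exactly $\phi$. Since $\phi$ fails property~L, there is a vertex $V_0'\subseteq[n]$ and a directed cycle in $\L_{\phi}(V_0')$; I would lift $V_0'$ to $U_0 = V\cup V_0'\in\G$ and argue that the corresponding cycle appears in $\L_{\tilde\psi}(U_0)$. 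The key point is that the L-graph arc $(s,t)$ at $U_0$ for $s,t\in[n]\setminus V_0'$ is determined by whether exactly one of $U_0$ and $U_0\cup\{s\}$ has an outgoing edge along dimension $t$; since $t\leq n$ and $U_0, U_0\cup\{s\}$ both lie in $\G$ (as $s\leq n$), the relevant outmap values are $\tilde\psi(U_0)_L = \phi(V_0')$ and $\tilde\psi(U_0\cup\{s\})_L = \phi(V_0'\cup\{s\})$, so the arc is present in $\L_{\tilde\psi}(U_0)$ exactly when it is present in $\L_\phi(V_0')$. Thus $\L_{\tilde\psi}(U_0)$ contains (a copy of) the violating cycle, so $\tilde\psi$ fails property~L; equivalently I could phrase this as ``the L-graph of a vertex is inherited by subcubes containing it,'' since $\G$ is a face and the L-graph of a vertex relative to a face equals the restriction of its L-graph in the whole orientation to the carrier of that face.

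The main obstacle is purely bookkeeping: getting the subscript/renaming conventions straight between $\psi$, its mirror image, the subcube $[V,V\cup[n]]$, and the translated copy of $\phi$ — in particular verifying carefully that the arcs of $\L_\phi(V_0')$, which live on $[n]\setminus V_0'$, match the arcs of $\L_{\tilde\psi}(U_0)$ restricted to those same $n-|V_0'|$ dimensions, using that $U_0\cup\{s\}\in[V,V\cup[n]]$ whenever $s\in[n]\setminus V_0'$. There is no real mathematical difficulty: the permutation step is a triviality about relabeling, the mirror step is exactly Lemma~\ref{lem:kaleidoscope}, and the cycle-survives step is the elementary observation that an L-graph at a vertex only depends on the restriction of the orientation to an $n$-face through that vertex. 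I would keep the write-up short, isolating the ``L-graphs are inherited by faces'' observation as a one-line remark (or folding it into the argument), and then the theorem follows in a few lines.
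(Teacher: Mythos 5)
Your proposal is correct and follows essentially the same route as the paper's proof: decompose the automorphism as $h(V)=\pi(V\oplus F)$, dispose of the permutation as a harmless relabeling, apply Lemma~\ref{lem:kaleidoscope} to find the translated copy of $\phi$ in the mirror image, and observe that a cyclic L-graph $\L_{\phi}(W)$ survives as a subgraph of $\L_{\psi'}(V\cup W)$ because the relevant arcs only involve dimensions in $[n]$. The only cosmetic difference is the order in which the permutation and the mirroring are handled, which does not affect the argument.
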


\begin{proof}
Recall that any $m$-cube automorphism $h$ is of the form
\begin{equation}\label{eq:h}
 h(V)=\pi (V \oplus F), \quad V\subseteq[m],
\end{equation}
where
$F\subseteq[m]$ is the set of mirrored dimensions, and
$\pi:[m]\rightarrow[m]$ is a permutation (renaming) of dimensions.

With $m=2n$, let $\psi$ and $\psi'$ be isomorphic under $h$ as in (\ref{eq:h}). We first consider the case where $\pi=id$, the identity permutation. In this case, $\psi'$ is the mirror image of $\psi$ along dimensions $F$, and by Lemma~\ref{lem:kaleidoscope}, $\psi'$ contains $\phi$ in the sense that there is some $V\subseteq\{n+1,\ldots,2n\}$ such that (\ref{eq:contains}) holds. This shows that the L-graph $\L_{\psi'}(V\cup W)$ contains $\L_{\phi}(W)$ as a subgraph. Indeed, $(i,j)$ is by (\ref{eq:Lgraph}) an arc of $\L_{\phi}(W)$ if and only if $j\in \phi (W)\oplus \phi (W\cup \{i\}) = \phi ((V\cup W)_L)\oplus \phi ((V\cup W\cup \{i\})_L) $. By (\ref{eq:contains}) applied with $U=V\cup W, V\cup W\cup \{i\}$, we then have
$j\in \psi'(V\cup W)_L \oplus \psi'(V\cup W\cup\{i\})_L$ which for $j\in[n]$ is equivalent to $j\in \psi'(V\cup W) \oplus \psi'(V\cup W\cup\{i\})$, so $(i,j)$ is also an arc of $\L_{\psi'}(V\cup W)$. Since $\L_{\phi}(W)$ is cyclic, so is $\L_{\psi'}(V\cup W)$, and $\psi'$ does not have property~L.

In the general case where $\pi\neq id$, we obtain $\psi'$ by first mirroring $\psi$ along the dimensions $F$ (the result does not have property~L), and then renaming dimensions according to $\pi$. As the latter operation does not affect property~L, $\psi'$ does not have property~L, either.
\end{proof}

\paragraph{P-cube kaleidoscopes.} From Theorem~\ref{thm:kaleidoscope}, we already know that there are USOs without an isomorphic copy that satisfies property~L, namely any kaleidoscope of a $n$-cube USO without property~L. Here we consider the setting in which the $n$-cube USO is a P-cube. Our goal is to start with the P-cube and construct a kaleidoscope that is also a P-cube, implying the existence of P-cubes that have no isomorphic copy with property~L.

It is not a priori clear how to approach this contruction because it is not known if the kaleidoscopes built from Schurr and Szab\'o's combinatorial product construction~\cite{SchSza:Finding} are P-cubes. In what follows, we develop an algebraic construction that builds a P-cube kaleidoscope from any given P-cube. We found it somewhat surprising that this works (in a simple way), and the construction may be of general interest as a new way to build P-cubes from P-cubes.

We start with a construction that ``blows up'' a P-matrix to twice its dimension.

\begin{lemma}\label{lm:general}
Let $A$ be an $n\times n$ P-matrix. Then \[
   M = \left(
     \begin{array}{cc}
       A & A + I \\
       A-I & A
     \end{array}
     \right) \in \R^{2n\times 2n}
   \]
   is also a P-matrix.
 \end{lemma}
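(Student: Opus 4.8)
\textbf{Proof strategy for Lemma~\ref{lm:general}.}

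The plan is to verify that every principal submatrix of $M$ has positive determinant directly from the block structure. A principal submatrix of $M$ is indexed by a subset $S\subseteq[2n]$; writing $S_L=S\cap[n]$ and $S_H=\{i-n:i\in S\cap\{n+1,\ldots,2n\}\}$ as in the kaleidoscope notation, the corresponding submatrix is built from the four blocks $A_{S_L,S_L}$, $(A+I)_{S_L,S_H}$, $(A-I)_{S_H,S_L}$, $A_{S_H,S_H}$. The key observation I would exploit is that $M$ can be written as $M = \begin{pmatrix} A & A \\ A & A\end{pmatrix} + \begin{pmatrix} 0 & I \\ -I & 0\end{pmatrix}$, i.e. as a symmetric-looking ``rank-structured'' part plus a skew-symmetric part. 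So first I would compute $\xx^\top M\xx$ for the relevant principal submatrix: splitting $\xx=(\uu,\vv)$ along $S_L,S_H$, the cross terms from $\begin{pmatrix}0&I\\-I&0\end{pmatrix}$ cancel (that block contributes $\uu^\top\vv - \vv^\top\uu = 0$ after accounting for the index sets --- here one must be slightly careful because $S_L$ and $S_H$ need not be equal, so the off-diagonal identity blocks are rectangular $0/1$ incidence-type matrices rather than square identities, but the skew part still contributes zero to the quadratic form on the diagonal-overlapping coordinates and something that telescopes on the rest).

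Rather than fight the rectangular-block bookkeeping, the cleaner route I would actually pursue is a change of basis / Schur-complement computation. Assume first that $S = [2n]$ (the full matrix); the general principal submatrix will follow by the same argument applied to $A_{T,T}$ for the appropriate $T$, once we see what the argument needs about $A$. Perform block row operations: subtract the top block row from the bottom block row. This replaces the bottom blocks $(A-I \mid A)$ by $(A-I-A \mid A-A) = (-I \mid 0)$, without changing the determinant. So
\[
  \det M = \det\begin{pmatrix} A & A+I \\ -I & 0\end{pmatrix}.
\]
Now expand along the bottom block: this is a $2\times 2$ block matrix with an invertible bottom-left block $-I$ and zero bottom-right block, so $\det M = \det(-I)\cdot\det\big(-(A+I)(-I)^{-1}\big)\cdot(-1)^{?}$ --- more carefully, $\det\begin{pmatrix}A & B\\ C & 0\end{pmatrix} = (-1)^{n}\det(C)\det(B)$ when the blocks are $n\times n$, giving $\det M = (-1)^n\det(-I)\det(A+I) = (-1)^n(-1)^n\det(A+I) = \det(A+I)$.

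So the determinant of $M$ equals $\det(A+I)$. Now the same row operation applied to any principal submatrix indexed by $S$ with $|S_L| = |S_H|$ and $S_L = S_H$ shows its determinant is $\det(A_{S_L,S_L} + I)$. The genuinely new case is a principal submatrix where $S_L \ne S_H$: then the row operation ``subtract row $i$ (lower copy) from row $i$ (upper copy)'' is only available for indices $i$ present in \emph{both} halves, so one gets a partial reduction leaving a block structure indexed by $S_L\cap S_H$, $S_L\setminus S_H$, $S_H\setminus S_L$. I expect this bookkeeping --- reducing the general principal minor to a determinant of a matrix manufactured from $A$ and $I$ --- to be the main obstacle, and I would handle it by induction on $|S_L\oplus S_H|$: each index lying in exactly one half can be eliminated by a single row or column operation (using the $\pm I$ blocks) that strictly decreases $|S_L\oplus S_H|$ while preserving the property that what remains is a principal-type submatrix of a P-matrix-derived object. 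The base case $S_L = S_H = T$ gives determinant $\det(A_{T,T}+I)$, which is positive because $A_{T,T}$ is a P-matrix (principal submatrix of the P-matrix $A$) hence has all eigenvalues with positive real part, so $A_{T,T}+I$ is invertible with positive determinant (no real eigenvalue of $A_{T,T}$ is $\le -1 < 0$; more robustly, $\det(A_{T,T}+I) = \prod_\lambda(\lambda+1)$ over eigenvalues $\lambda$ of $A_{T,T}$, and non-real eigenvalues come in conjugate pairs contributing $|\lambda+1|^2>0$ while real ones are positive, so the product is positive). For the inductive step, eliminating an index in one half produces a factor that is $\pm1$ times a smaller determinant of the same type, and one checks the signs work out to keep everything positive. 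Assembling these observations gives that every principal minor of $M$ is positive, i.e.\ $M$ is a P-matrix.
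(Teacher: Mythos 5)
Your route (verifying positivity of every principal minor of $M$ directly via block row operations) is genuinely different from the paper's, which never touches minors: the paper uses the sign non-reversal characterization of P-matrices (for every $\zz\neq\nula$ there is an index $i$ with $z_i(M\zz)_i>0$), splits $\zz=(\xx,\yy)$, and finishes with a short two-case argument. Your route is not impossible, but as written it has a concrete computational error and a gap at exactly the step you flag as the main obstacle. The error first: subtracting the top block row of $M=\bigl(\begin{smallmatrix}A & A+I\\ A-I & A\end{smallmatrix}\bigr)$ from the bottom block row gives $(A-I-A \mid A-(A+I))=(-I\mid -I)$, not $(-I\mid 0)$. A further column operation then yields
\[
\det M=\det\begin{pmatrix}A & I\\ -I & 0\end{pmatrix}=(-1)^n\det(-I)\det(I)=1,
\]
not $\det(A+I)$ (check $n=1$: $a^2-(a+1)(a-1)=1$). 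By the same computation your ``base case'' minors with $S_L=S_H=T$ equal $1$, not $\det(A_{T,T}+I)$. These values happen to be positive, so positivity is not destroyed, but the formulas on which you hang the induction are false.

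The more serious problem is the case $S_L\neq S_H$, which you leave as an expectation, and for which the mechanism you describe does not exist. In $M_{S,S}$ the $\pm 1$ contributions of the identity blocks sit only at positions $(i,i)$ with $i\in S_L\cap S_H$; an index lying in exactly one of $S_L,S_H$ contributes a row and a column consisting purely of entries of $A$, so there is no ``$\pm I$ entry'' available to eliminate it with---your proposed inductive step is backwards. The eliminations that are available use the indices in $S_L\cap S_H$ (subtract the top copy of row $i$ from the bottom copy, then clean up columns), and one must then track what remains; small cases suggest the clean statement to aim for is $\det M_{S,S}=\det A_{S_L\oplus S_H,\,S_L\oplus S_H}$ (consistent with $\det M=1$), which would indeed give all principal minors positive since $A$ is a P-matrix. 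But nothing in your sketch establishes this, so the proof is incomplete precisely where the work lies; the paper's argument via sign non-reversal avoids this bookkeeping entirely.
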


 \begin{proof}
   We use a known characterization of P-matrices~\cite[Theorem
   3.3.4]{CotPanSto:LCP}: $M$ is a P-matrix if and only if for every
   nonzero vector $\zz$, there exists an index $i$ such that
   $z_i(M\zz)_i>0$ ($M$ does not reverse all signs of $\zz$). 

   So let us consider a nonzero $2n$-vector $\zz=(\xx,\yy)$ where
   $\xx$ and $\yy$ are $n$-vectors. For $i\in[n]$, we have
   \begin{equation}\label{eq:extend_P1}
     z_i(M\zz)_i = x_i (A\xx + (A+I)\yy)_i = x_i (A(\xx+\yy))_i + x_iy_i,
   \end{equation}
   and 
   \begin{equation}\label{eq:extend_P2}
     z_{i+n}(M\zz)_{i+n} = y_i ((A-I)\xx + A\yy)_i = y_i (A(\xx+\yy))_i - x_iy_i,
   \end{equation}
   If $\xx=-\yy\neq \nula$, there is an index $i$ such that $x_i=-y_i\neq
   0$. In this case,  (\ref{eq:extend_P2}) yields $
   z_{i+n}(M\zz)_{i+n}=-x_iy_i>0$. If $\xx\neq -\yy$, we 
   add up (\ref{eq:extend_P1}) and  (\ref{eq:extend_P2}) to get
   \[
z_i(M\zz)_i + z_{i+n}(M\zz)_{i+n} = (\xx+\yy)_i (A(\xx+\yy))_i.
\]
and since $A$ is a $P$-matrix, there is an index $i$ such that
$z_i(M\zz)_i + z_{i+n}(M\zz)_{i+n} >0$. But then, one of $z_i(M\zz)_i$
and $z_{i+n}(M\zz)_{i+n}$ must be positive as well. 
\end{proof}

Using the blow-up construction of Lemma \ref{lm:general}, along with a suitable right-hand side $\qq$, we can now construct a P-cube kaleidoscope from any P-cube. 

\begin{theorem}\label{thm:pkaleidoscope}
  Let $A\in\R^{n\times n}$ be a $P$-matrix, $\bb\in\R^n$ generic. Let $\phi$ be the $n$-dimensional P-cube defined by $A$ and $\bb$; see Definition~\ref{def:P-cube}. Let 
  \[
     M = \left(
     \begin{array}{cc}
       A & A + I \\
       A-I & A
     \end{array}
     \right) \in \R^{2n\times 2n}
   \]
   and
   \[
     \qq = \left(\begin{array}{cc} \bb \\ \bb 
                  \end{array}\right) \in \R^{2n}.
                \]
Then $M$ and $\qq$ define a $2n$-dimensional P-cube $\psi$. Furthermore, $\psi$ is a kaleidoscope for $\phi$.
\end{theorem}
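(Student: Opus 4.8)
The plan is to verify the two claims of Theorem~\ref{thm:pkaleidoscope} in turn: first that $M$ and $\qq$ define a genuine $2n$-dimensional P-cube $\psi$, and second that this $\psi$ is a kaleidoscope for $\phi$, i.e.\ that it satisfies the defining identity~(\ref{eq:kaleidoscope}). The first claim is nearly immediate: Lemma~\ref{lm:general} already establishes that $M$ is a P-matrix, and we only need to observe that $\qq=(\bb,\bb)$ is generic with respect to $M$. Genericity means that for every $V\subseteq[2n]$, no coordinate of $M(V)^{-1}\qq$ vanishes; I would either argue this holds for ``almost all'' such $\bb$ and note that the genericity of $\bb$ for $A$ can be upgraded, or—cleaner—defer this to the explicit computation of $M(V)^{-1}\qq$ that I will need for the kaleidoscope claim anyway, where the relevant coordinates will turn out to be exactly the coordinates of $A(U)^{-1}\bb$ for an appropriate $U$, and those are nonzero precisely because $\bb$ is generic for $A$. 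Once genericity is in hand, Definition~\ref{def:P-cube} gives that $\psi$ with outmap $\phi_\psi(V)=\{i\in[2n]:(M(V)^{-1}\qq)_i<0\}$ is a $2n$-cube USO.

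The heart of the proof is computing $\psi(V)_L$ and showing it equals $\phi(V_L\oplus V_H)$. Fix $V\subseteq[2n]$ and write $V_L\subseteq[n]$ for the lower part and $V_H\subseteq[n]$ for the (renamed) upper part. By~(\ref{eq:AB}), the matrix $M(V)$ is obtained from $M$ by negating the columns indexed by $V$ and replacing the columns not in $V$ by the corresponding identity columns; here the columns split into a ``lower block'' indexed by $[n]$ (selected by $V_L$) and an ``upper block'' indexed by $\{n+1,\dots,2n\}$ (selected by $V_H$, shifted). The plan is to solve $M(V)\xx=\qq$ directly using the block structure $M=\begin{pmatrix}A & A+I\\ A-I & A\end{pmatrix}$ and $\qq=(\bb,\bb)$. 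I expect that the two identity-column blocks let us eliminate the ``$w$-type'' unknowns, and the resulting reduced system on the ``$z$-type'' unknowns will, after the telescoping cancellations built into the $\pm I$ off-diagonal blocks, collapse to the system $A(V_L\oplus V_H)\,\zz=\bb$ in $\R^n$. Concretely I would guess that the first $n$ coordinates $\xx_{[n]}$ of the solution are governed by exactly $A(V_L\oplus V_H)$: the symmetric-difference $V_L\oplus V_H$ arises because a lower dimension $i$ and the matching upper dimension $i$ contribute opposite-sign $I$-terms, so whether column $i$ of the reduced matrix looks ``negated'' or ``identity-like'' depends on the parity of $|\{i\}\cap V_L|+|\{i\}\cap V_H|$. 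If this computation goes through, then $(\psi(V))_L=\{i\in[n]:(M(V)^{-1}\qq)_i<0\}=\{i:(A(V_L\oplus V_H)^{-1}\bb)_i<0\}=\phi(V_L\oplus V_H)$, which is exactly~(\ref{eq:kaleidoscope}), and as a byproduct the genericity of $\qq$ follows from that of $\bb$.

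The main obstacle will be the linear-algebra bookkeeping in that block elimination: keeping straight which columns are negated, which are identity columns, and correctly tracking how the $+I$ and $-I$ off-diagonal blocks interact with those substitutions so that the cross terms cancel and only $A$-terms on the combined index set $V_L\oplus V_H$ survive. It is the same flavor of calculation as the $M'=M(\{k\})^{-1}M$ computation in the proof of Theorem~\ref{thm:main} (indeed Stickney--Watson's Property~5 on how $M(V)^{-1}$ acts could be a useful organizing tool: one can build $M(V)$ up one dimension at a time and track the effect on the relevant coordinates), but with two interleaved families of dimensions it is more delicate. A sanity check I would run first is the case $V\subseteq\{n+1,\dots,2n\}$, where the lower block of $M(V)$ consists entirely of identity columns; there $\xx_{[n]}$ should be read off almost immediately and should equal $A(V_H)^{-1}\bb$, matching $\phi(V_H)=\phi(V_L\oplus V_H)$ since $V_L=\emptyset$, and this also recovers the statement in the text that the subcube $[V,V\cup[n]]$ realizes the mirror image of $\phi$ along $V_H$. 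Having verified that special case and the fully-general elimination, the theorem follows.
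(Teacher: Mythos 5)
Your plan is essentially the paper's own proof: the paper also reduces the $2n$-dimensional system $\ww-M\zz=\qq$ with the prescribed zeros to the $n$-dimensional system for $A$ at the index set $V_L\oplus V_H$, via the change of variables $\xx=\zz+\zz'=\ww-\ww'$ and $\yy=\ww-\zz'=\ww'+\zz$, which is exactly the block elimination you anticipate; the lower missing entries do coincide coordinatewise with those of $A(V_L\oplus V_H)^{-1}\bb$, and the genericity of $\qq$ falls out of the same computation, just as you predict. The only real difference is presentational: the paper organizes the bookkeeping through these two auxiliary vectors and a short case analysis of which of $z_i,w_i,z'_i,w'_i$ vanish, rather than through explicit column elimination in $M(V)$.
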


\begin{proof}
  We already know from Lemma~\ref{lm:general} that $M$ is a P-matrix. If $\qq$ is generic, $M$ and $\qq$ define a P-cube $\psi$. For the kaleidoscope property, we will compute the outmap values $\psi(V), V\subseteq[2n]$. In doing so, we will also see that $\qq$ is indeed generic. Let us fix $V$ for the remainder of the proof. We recall from Section~\ref{sec:P} that $\psi(V)$ is determined by the signs of the variables not prescribed by $V$. More concretely, there are unique $\ww$ and $\zz$ that solve the system of equations $\ww-M\zz=\qq$; $w_i=0, i\in V$; $z_i=0, i\notin V$. And the outmap value of $V$ is
  \begin{equation}\label{eq:psigeneral}
\psi(V) = \{i: z_i<0 \mbox{~or~} w_i<0 \},
\end{equation}
where $\qq$ being generic means that $z_i=w_i=0$ can never happen. 
Hence, for $M$ and $\qq$ as in the statement of the theorem, there are unique $\ww,\ww',\zz,\zz'\in\R^n$ such that
\begin{eqnarray}
\left(\begin{array}{cc} \ww \\ \ww'
      \end{array}\right) -  \left(
     \begin{array}{cc}
       A & A + I \\
       A-I & A
     \end{array}
     \right) \left(\begin{array}{cc} \zz \\ \zz'
      \end{array}\right)  &=& \left(\begin{array}{cc} \bb \\ \bb 
                     \end{array}\right), \label{eq:2neq}\\
  w_i&=&0, \quad i\in V_L,  \label{eq:2neq1} \\ 
  w'_i&=&0, \quad i\in V_H, \label{eq:2neq2} \\
  z_i&=&0, \quad i\notin V_L, \label{eq:2neq3}  \\
  z'_i&=&0, \quad i\notin V_H. \label{eq:2neq4}
\end{eqnarray}
The outmap (\ref{eq:psigeneral}) is then determined by 
\begin{eqnarray}
  \psi(V)_L &=& \{i\in[n]: z_i<0 \mbox{~or~} w_i<0\}, \label{eq:psispecial1} \\ 
  \psi(V)_H &=& \{i\in[n]: z'_i<0 \mbox{~or~} w'_i<0\}. \label{eq:psispecial2}
\end{eqnarray}
Expanding (\ref{eq:2neq}), we get
\begin{eqnarray}
  \ww  -\zz' - A(\zz + \zz') = \bb, \\ \label{eq:lcpA1}
  \ww' + \zz - A(\zz + \zz') = \bb, \label{eq:lcpA2}
\end{eqnarray}
and it follows that
\begin{eqnarray}
  \ww-\zz'=\ww'+\zz &:=& \yy, \label{eq:ydef}\\
  \zz+\zz'=\ww-\ww' &:=& \xx. \label{eq:xdef}
\end{eqnarray}

With this and (\ref{eq:2neq1}) through (\ref{eq:2neq4}), we can summarize the situation as in Figure~\ref{fig:kproof}.

\begin{figure}[htb]
\begin{center}  
  \includegraphics[width=0.5\textwidth]{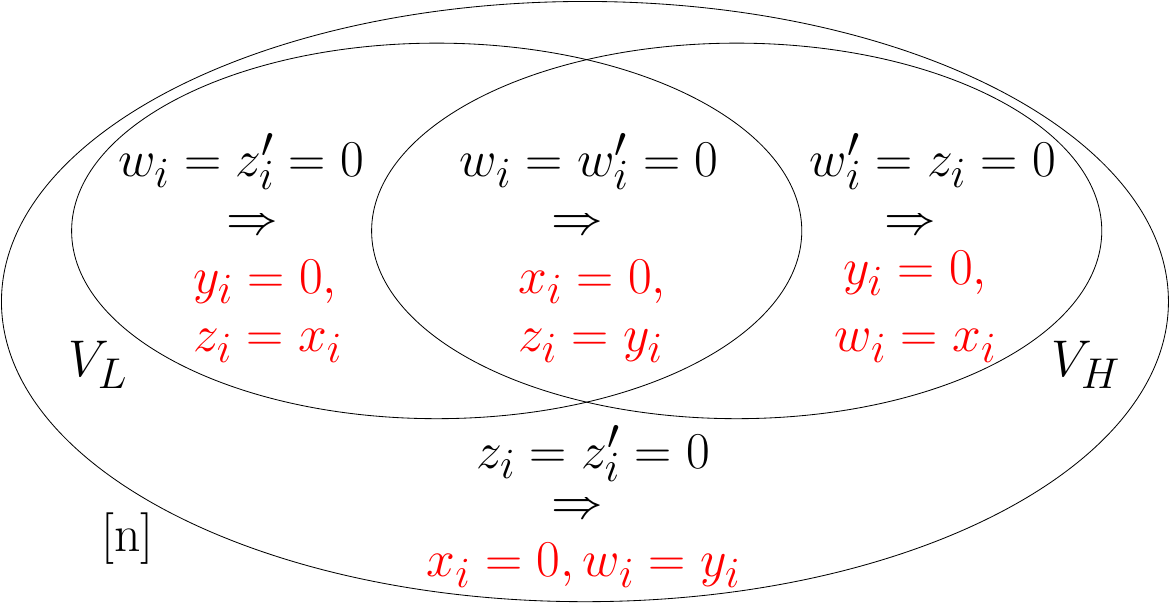}
\end{center}
\caption{Proof of Theorem~\ref{thm:pkaleidoscope}: Prescribed variables and implied equalities\label{fig:kproof}} 
\end{figure}

From (\ref{eq:ydef}), (\ref{eq:xdef}), and Figure~\ref{fig:kproof}, we see that $\xx$ and $\yy$ (uniquely) solve the system $\yy-A\xx=\bb; y_i=0, i\in V_L\oplus V_H; x_i=0, i\notin V_L\oplus V_H$. Hence, 
the P-cube $\phi$ determined by $A$ and $\bb$ by definition satisfies
\begin{equation}\label{eq:phiV}
\phi (V_L\oplus V_H) = \{i\in[n]: x_i < 0 \mbox{~or~} y_i <0\}.
\end{equation}

To conclude that $\psi$ is a kaleidoscope for $\phi$, it remains to show that $\qq$ is generic, and that the kaleidoscope property (\ref{eq:kaleidoscope}) holds, meaning the outmap value in (\ref{eq:phiV}) equals the one in (\ref{eq:psispecial1}). For the latter, we have to prove that
\begin{equation}\label{eq:xyzw}
x_i < 0 \mbox{~or~} y_i <0 \quad \Leftrightarrow\quad  z_i<0 \mbox{~or~} w_i<0.
\end{equation}
This easily follows from the equalities depicted in Figure~\ref{fig:kproof}. If $x_i<0$, we have $z_i=x_i<0$ or $w_i=x_i<0$. If $y_i<0$, we have $z_i=y_i<0$ or $w_i=y_i<0$. Vice versa, if $z_i<0$, then $x_i=z_i<0$ or $y_i=z_i<0$; and if $w_i<0$, then $x_i=w_i<0$ or $y_i=w_i<0$.

To prove that $\qq$ is generic, we need to show that $z_i=w_i=0$ and $z'_i=w'_i=0$ can never happen. Adding up (\ref{eq:ydef}) and (\ref{eq:xdef}) yields $\ww+\zz=\xx+\yy$. Since $\bb$ is generic, we know that $x_i+y_i\neq 0$ for all $i$ (one of the values is $0$, the other one isn't). Hence, also $w_i+z_i\neq 0$ for all $i$. Again, one of these two values is $0$, so $w_i-z_i\neq 0$ for all $i$ as well, and with (\ref{eq:ydef}), $z'_i+w'_i\neq 0$ follows for all $i$.
\end{proof}

As an example, the kaleidoscope for the spinner, depicted in Figure~\ref{fig:kaleidoscope}, is generated by
\[
M= \left(
\begin{array}{cccccc}
1 & 2 & 0 & 2 & 2 & 0 \\
0 & 1 & 2 & 0 & 2 & 2 \\
2 & 0 & 1 & 2 & 0 & 2 \\
0 & 2 & 0 & 1 & 2 & 0 \\
0 & 0 & 2 & 0 & 1 & 2 \\
2 & 0 & 0 & 2 & 0 & 1
\end{array}
\right), \quad \qq= \left(
\begin{array}{c} 1 \\ 1 \\ 1 \\ 1 \\ 1 \\ 1
\end{array}
\right),
\]
following the construction of Theorem~\ref{thm:pkaleidoscope} for the matrix $A$ and vector $\bb$ that generate the spinner according to (\ref{eq:spinnerMq}).

\section{Outlook}\label{sec:conclusion}
The main research question this work raises is whether we can algorithmically make use of property~L. Ideally, we would like to exploit the property towards speeding up sink-finding for D-cubes. 

On the one hand, it seems discouraging that property~L is in general not invariant under applying cube automorphisms. In the ``abstract USO world'', pairs of isomorphic USOs are typically considered ``the same'', and many algorithms in fact perform on them in the same way. Such algorithms do not have much potential to exploit the input's property~L when another isomorphic copy fails to satisfy it. 

On the other hand, the insight here may be that algorithms tailored towards D-cubes should have a ``sense of direction'' and \emph{not} ignore how vertices are labeled. 

There is a family of USO algorithms commonly summarized under the term \emph{product algorithm}~\cite[Lemma 3.2]{szabo2001unique}. In order to be able to apply them to a class $\mathcal{U}$ of USOs, two properties of $\mathcal{U}$ are required: $\mathcal{U}$ must be closed under taking subcubes, but also under taking \emph{inherited orientations}~\cite[Section 3]{szabo2001unique}. The class of all USOs
meets both requirements. The class of D-cubes is closed under taking subcubes (this implicitly follows from the proof of Theorem~\ref{thm:main}); but it is unknown whether the class is closed under taking inherited orientations (probably, it's not). 

Finding weaker closure properties and corresponding algorithms with a ``sense of direction''   might be a way to make progress here. 

\bibliography{lamperski.bib}
\bibliographystyle{plain}

\end{document}